\theoremstyle{definition}
\newtheorem{theorem}{Theorem}[section]
\newtheorem{lemma}[theorem]{Lemma}
\theoremstyle{definition}
\newcommand{\E}{\mathbf{E}}
\renewcommand{\P}{\mathbf{P}}
\newcommand{\Prob}[1]{\P\left\{#1\right\}}
\newcommand{\N}{\mathbb{N}}
\newcommand{\R}{\mathbb{R}}
\newcommand{\sF}{\mathcal{F}}
\newcommand{\sN}{\mathcal{N}}
\newcommand{\sZ}{\mathcal{Z}}
\newcommand{\one}{\mathbf{1}}
\newcommand{\eps}{\varepsilon}
\renewcommand{\epsilon}{\varepsilon}
\newcommand{\nt}{\lfloor nt \rfloor}
\newcommand{\ns}{\lfloor ns \rfloor}
\DeclareMathOperator{\cov}{Cov}
\DeclareMathOperator{\var}{Var}
\renewenvironment{abstract}{%
  \noindent\bfseries\abstractname:\normalfont}{}
\newlength{\querylen}
\title{Convergence of Random Walks in $\ell_p$-Spaces of Growing
  Dimension}
\author{Bochen Jin}
\date{\today}
\begin{document}
\maketitle

\begin{abstract}
  We prove the limit theorem for paths of random walks with $n$ steps in
  $\R^d$ as $n$ and $d$ both go to infinity. For this, the paths 
  are viewed as finite metric spaces equipped with the
  $\ell_p$-metric for $p\in[1,\infty)$. Under the assumptions that all
  components of each step are uncorrelated, centered, have finite $2p$-th 
  moments, and are identically distributed, we show that such random
  metric space converges in probability to a deterministic limit space
  with respect to the Gromov-Hausdorff distance. This result
  generalises earlier work by Kabluchko and Marynych \cite{MR4828862}
  for $p=2$.

  Keywords: random metric space, random walk, growing dimension,
  deterministic limit

  MSC 2020: 60F05, 60G50
\end{abstract}

\section{Introduction}
Consider a $d$-dimensional random walk defined by
\begin{displaymath}
S_0^{(d)} = 0, \quad S_n^{(d)} = X_1^{(d)} + X_2^{(d)} + \cdots +
X_n^{(d)}, \quad n \in \N,
\end{displaymath}
where $X_i^{(d)} = (X_{i,1}^{(d)}, \ldots, X_{i,d}^{(d)})$, $i \geq
1$, are independent identically distributed random vectors in
$\R^d$, and denote $S_i^{(d)} = (S_{i,1}^{(d)}, \ldots, S_{i,d}^{(d)})$.

Let $\ell_2$ be the space of square-summable real sequences with norm
denoted by $\|\cdot\|_2$. The values of this random walk 
\begin{displaymath}
  \sZ_n^{(d)}=\{S_0^{(d)},\ldots,S_n^{(d)}\},\quad n\in\N.
\end{displaymath}
are considered as a finite metric space which is embedded in $\R^d$
with the induced Euclidean metric.

In the regime when the dimension $d$ is fixed, provided
that $\E X_1^{(d)} = 0$ and $\E \|X_1^{(d)}\|_2^2 =1$, Donsker's
invariance principle implies that, after rescaling
by $n^{-1/2}$, the random set $\sZ_n^{(d)}$ converges in distribution
to the path of a $d$-dimensional Brownian motion on $[0,1]$.

When both $n$ and $d$ tend to
infinity, under the square integrability and several further
assumptions listed in \cite{MR4828862}, the
random metric space $(n^{-1/2}\sZ_n^{(d)}, \|\cdot\|_2)$
converges in probability to the Wiener spiral with respect to the
Gromov-Hausdorff distance. The latter space is the space of indicator
functions $\one_{[0,t]}$, $t\in[0,1]$, embedded in $L^2([0,1])$,
which is isometric to the interval $[0,1]$ equipped with the metric
$r(t,s) = \sqrt{|t - s|}$.

The \textit{Gromov--Hausdorff distance} between metric
spaces $\mathbb{X}=(X, \rho_X)$ and $\mathbb{Y}=(Y,
  \rho_Y)$ is defined as
\begin{displaymath}
  d_{GH}(\mathbb{X}, \mathbb{Y})=\inf_{i: X \hookrightarrow Z, j: Y \hookrightarrow Z} d_H
  \big(i(X), j(Y) \big),
\end{displaymath}
where the infimum is taken over all isometric embeddings $i$ and $j$
into all possible metric spaces $(Z, d)$ which can embed $X \
\text{and} \ Y$. The Hausdorff distance between sets $F$ and $H$
in $(Z,d)$ is defined as
\begin{displaymath}
  d_H(F, H)=\inf \{ \epsilon > 0: F \subset H^{\epsilon} \
  \text{and}\ H \subset F^{\epsilon} \},
\end{displaymath}
where $F^{\epsilon}=\{x: d(x, F) < \epsilon \}$ is the 
$\epsilon$-neighbourhood of $F$, see \cite[Chapter 7]{MR1835418}. 

We replace the $l_2$-metric on the space of sequences with the
$\ell_p$-metric for a general $p\in[1,\infty)$. The studies of random
metric spaces rely on identifying the spaces up to isometries.
Contrary to the $\ell_2$ setting, which admits a large group of
rotations as isometries, the isometry group of $\ell_p$ for $p \neq 2$
is far more constrained, including the permutations of components, see 
\cite{MR105017} and \cite[Theorem~7.4.1]{MR4249569}. Furthermore, 
while we have the identity $\|x + y\|_2^2 = \|x\|_2^2 + \|y\|_2^2 + 2\langle x, y \rangle,$
no analogous simple expression exists for $\|x + y\|_p^p$ with $p
\neq 2$. This complicates the analysis of path of the random walk.

It should be noted that Kabluchko and Marynych \cite{MR4828862}
established that for subsets of $\ell_2$, convergence in the
Gromov–Hausdorff sense is equivalent to convergence in the Hausdorff
distance up to isometries of $\ell_2$, that is distance for the
two subsets defined by taking the infimum over
the Hausdorff distance between their images under all possible isometries of
$\ell_2$.
However, this equivalence fails for compact subsets of $\ell_p$ when
$p \neq 2$. For instance, the two-point metric spaces
$F=\{(0,0,\ldots),(1,0,0,\ldots)\}$ and
$H=\{(0,0,\ldots),(a^{1/p},(1-a)^{1/p},0,\ldots)\}$ for any
$a\in(0,1)$ with the $\ell_p$-metric are isometric for any
$p\in[1,\infty)$ and so the Gromov--Hausdorff distance between them
vanishes, while it is not possible to map $F$ to $H$ using an isometry
of $\ell_p$ if $p\neq2$.

Fix a $p\in[1,\infty)$. Impose a special structure on the increments
of the random walk. Namely, we assume that
\begin{equation}
  \label{equ_spe_form}
  X_1^{(d)} = d^{-1/p}(\xi_{1}, \ldots, \xi_{d}),
\end{equation}
where $\xi_{1}, \ldots, \xi_{d}$ are uncorrelated random
variables which share the same distribution with a centered and
$2p$-integrable nontrivial random variable $\xi$. Denote $\E
\xi^2=\sigma^2$. Let $M_p$ denote the $p$-th absolute moment of the
standard normal distribution. 
\begin{theorem} 
\label{the_pair_lp}
  Let $p \in [1 ,\infty)$ and $d = d(n)$ be an arbitrary
  sequence of positive integers such that $d(n) \to \infty$
  as $n\to\infty$. Consider a random walk with increments given by 
  \eqref{equ_spe_form}. Then, as $n \to \infty$, the random metric space 
  $(n^{-1/2}\sZ_n^{(d)},\|\cdot\|_p)$, 
  converges in probability to $\big([0,1], \sqrt{|t-s|} \sigma
  M_p^{1/p} \big)$ under the Gromov-Hausdorff distance.
\end{theorem}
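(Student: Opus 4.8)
The plan is to bound the Gromov--Hausdorff distance by the distortion of an explicit correspondence, reduce that distortion to uniform control of the rescaled interpoint distances, and obtain this control by combining a pointwise limit theorem with a modulus-of-continuity estimate. Write $X_k^{(d)}=d^{-1/p}(\xi_{k,1},\dots,\xi_{k,d})$, where the vectors $(\xi_{k,1},\dots,\xi_{k,d})$, $k\ge1$, are independent copies of $(\xi_1,\dots,\xi_d)$; thus for each fixed $m$ the $\xi_{k,m}$, $k\ge1$, are i.i.d.\ copies of $\xi$, while for fixed $k$ the $\xi_{k,m}$, $1\le m\le d$, are centered, of variance $\sigma^2$, and pairwise uncorrelated. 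Put $g(s,t)=\sigma M_p^{1/p}\sqrt{|t-s|}$. Since $S_j^{(d)}-S_i^{(d)}=d^{-1/p}\big(\sum_{k=i+1}^{j}\xi_{k,1},\dots,\sum_{k=i+1}^{j}\xi_{k,d}\big)$, for $0\le i\le j\le n$,
\begin{equation}\label{eq:distance}
  n^{-p/2}\big\|S_j^{(d)}-S_i^{(d)}\big\|_p^p=\frac1d\sum_{m=1}^d\Big|\,n^{-1/2}\sum_{k=i+1}^{j}\xi_{k,m}\Big|^p .
\end{equation}
Consider the correspondence $\sR_n=\{(n^{-1/2}S_{\lfloor nt\rfloor}^{(d)},t):t\in[0,1]\}$ between $n^{-1/2}\sZ_n^{(d)}$ and $[0,1]$; both projections are onto, so $d_{GH}\le\frac12\,\mathrm{dis}(\sR_n)$ by \cite[Chapter~7]{MR1835418}, and since $|\sqrt a-\sqrt b|\le\sqrt{|a-b|}$ one gets $\mathrm{dis}(\sR_n)\le\Delta_n+O(n^{-1/2})$ with $\Delta_n:=\max_{0\le i<j\le n}\big|n^{-1/2}\|S_j^{(d)}-S_i^{(d)}\|_p-\sigma M_p^{1/p}\sqrt{(j-i)/n}\big|$. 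Hence it suffices to prove $\Delta_n\to0$ in probability.

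I would split this into (a) a \emph{pointwise statement}: for every fixed $0\le s<t\le1$, $n^{-1/2}\|S_{\lfloor nt\rfloor}^{(d)}-S_{\lfloor ns\rfloor}^{(d)}\|_p\to\sigma M_p^{1/p}\sqrt{t-s}$ in probability; and (b) a \emph{modulus-of-continuity bound}: for every $\eps>0$, $\lim_{\delta\downarrow0}\limsup_{n}\Prob{\max_{0\le i<j\le n,\,j-i\le\delta n}n^{-1/2}\|S_j^{(d)}-S_i^{(d)}\|_p>\eps}=0$. Given (a) and (b), the passage to $\Delta_n\to0$ is routine: on a mesh $t_\ell=\ell\delta$, write any pair $i<j$ in terms of the mesh points $t_\ell,t_{\ell'}$ just below $i/n,j/n$, use the reverse triangle inequality $\big|\,\|a\|_p-\|b\|_p\,\big|\le\|a-b\|_p$ with $a=S_j-S_i$ and $b=S_{\lfloor nt_{\ell'}\rfloor}-S_{\lfloor nt_\ell\rfloor}$ (so $a-b=(S_j-S_{\lfloor nt_{\ell'}\rfloor})-(S_i-S_{\lfloor nt_\ell\rfloor})$) together with the $\tfrac12$-Hölder continuity of $g$, to bound $\Delta_n\le3\eps+\sigma M_p^{1/p}\sqrt{3\delta}$ off an event whose probability tends to $0$ (using (a) for the finitely many mesh pairs and (b) for the remainder); then let $n\to\infty$, $\delta\downarrow0$, $\eps\downarrow0$.

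For (a), set $V_m=\sum_{k=\lfloor ns\rfloor+1}^{\lfloor nt\rfloor}\xi_{k,m}$, $W_m=n^{-1/2}V_m$, $\ell=\lfloor nt\rfloor-\lfloor ns\rfloor\le n$, so \eqref{eq:distance} equals $d^{-1}\sum_m|W_m|^p$. The classical CLT gives $W_m\dto\sigma\sqrt{t-s}\,\mathcal N(0,1)$, and $\sup_n\E|W_m|^{2p}<\infty$ by Rosenthal's inequality --- this is precisely where the $2p$-integrability of $\xi$ is used --- so $\{|W_m|^p\}_n$ is uniformly integrable and $\E[d^{-1}\sum_m|W_m|^p]=\E|W_1|^p\to\sigma^p(t-s)^{p/2}M_p$. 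It remains to show the variance $n^{-p}d^{-2}\big(\sum_m\var(|V_m|^p)+\sum_{m\ne m'}\cov(|V_m|^p,|V_{m'}|^p)\big)\to0$. Since $\var(|V_m|^p)\le\E|V_m|^{2p}\le C_p\ell^p$ (Rosenthal again), the diagonal part is $O(\ell^p/(n^pd))=O(1/d)$. The off-diagonal part is the delicate point: although $\xi_{k,m}$ and $\xi_{k,m'}$ are only uncorrelated (not independent) for fixed $k$, each $V_m$ is a sum of $\ell\to\infty$ independent increments, and decomposing $|V_m|^p$ into martingale differences along the rows $k$ shows the leading cross term of $\cov(|V_m|^p,|V_{m'}|^p)$ is a multiple of $\E[\xi_{k,m}\xi_{k,m'}]=0$; hence $\cov(|V_m|^p,|V_{m'}|^p)$ is of strictly smaller order than the trivial $\ell^p$, uniformly in $m\ne m'$ and in the joint law, so that summed over the $\le d^2$ pairs and weighted by $n^{-p}d^{-2}$ (with $\ell\le n$) this contribution vanishes. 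A Chebyshev bound then gives (a). For $p=2$ this reduces to the exact identity $\cov(V_m^2,V_{m'}^2)=\ell\,\cov(\xi_{1,m}^2,\xi_{1,m'}^2)$.

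Statement (b) is proved by the same mechanism: the triangle inequality reduces it to controlling, over the $O(1/\delta)$ blocks, $R_r:=\max_{\lfloor r\delta n\rfloor\le i\le\lfloor (r+2)\delta n\rfloor}n^{-1/2}\|S_i-S_{\lfloor r\delta n\rfloor}\|_p$; Doob's maximal inequality together with Rosenthal's inequality gives $\E R_r^p\le C_p\delta^{p/2}$ and $\E R_r^{2p}\le C_p'\delta^p$, and the same covariance estimate applied to the running maxima shows each $R_r^p$ concentrates about its mean, so that a union bound over the blocks (using $d(n)\to\infty$) yields (b). The main obstacle throughout is exactly the covariance bound invoked twice here: the Cauchy--Schwarz estimate $|\cov(|V_m|^p,|V_{m'}|^p)|\le\var(|V_1|^p)=O(\ell^p)$ is too weak by a full power of $\ell$, and recovering that power genuinely requires the vanishing correlation of distinct coordinates within a single step; since for $p\ne2$ there is no serviceable expansion of $\|x+y\|_p^p$, this is carried out through the martingale/Hoeffding decomposition above, which is where the bulk of the technical work resides.
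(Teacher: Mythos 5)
Your overall frame coincides with the paper's: bound the Gromov--Hausdorff distance by the distortion of the correspondence $\{(n^{-1/2}S_{\nt}^{(d)},t)\}$, reduce to $\sup_{s\le t}\big|n^{-1/2}\|S_{\nt}^{(d)}-S_{\ns}^{(d)}\|_p-\sigma M_p^{1/p}\sqrt{t-s}\big|\to0$, and obtain this from a pointwise second-moment argument plus a discretization/modulus-of-continuity step (the paper's Theorems \ref{thm_general_l_p}--\ref{equ_unif_lp}). However, the two places you yourself describe as ``where the bulk of the technical work resides'' are precisely where the proposal stops short, and both are genuine gaps. First, the off-diagonal covariance bound in (a): the claim that a martingale/Hoeffding decomposition of $|V_m|^p$ makes the leading cross term proportional to $\E[\xi_{k,m}\xi_{k,m'}]=0$, with a remainder $o(\ell^p)$ \emph{uniformly in the joint law}, is asserted, not proved. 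For $1\le p<2$ the map $x\mapsto|x|^p$ is not twice differentiable, so the expansion behind ``leading cross term'' has no obvious rigorous form, and the uniformity over the $d$-dependent joint laws of the pairs $(\xi_{\cdot,m},\xi_{\cdot,m'})$ is exactly what has to be established. The paper does this differently: its bivariate moment convergence theorem (Theorem \ref{them_mom_two_dimen}) combines the bivariate CLT (uncorrelated coordinates give a diagonal $\Sigma$, hence an independent Gaussian limit) with uniform integrability of $|n^{-1/2}S_n|^p|n^{-1/2}Z_n|^p$, proved by truncation plus the Marcinkiewicz--Zygmund inequality, with bounds depending only on the marginal law of $\xi$; this yields $\cov\big(|n^{-1/2}S_n|^p,|n^{-1/2}Z_n|^p\big)\to0$. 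Without that theorem, or a completed version of your decomposition, step (a) is unproven for $p\neq2$.

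Second, the modulus-of-continuity step (b). Your moment bound $\E R_r^{2p}\le C_p'\delta^p$ does close this step for $p>1$: Markov and the union bound over $\asymp\delta^{-1}$ blocks give a bound of order $\delta^{p-1}\eps^{-2p}$, which vanishes as $\delta\downarrow0$ --- a legitimate and in fact simpler route than the paper's at this point (it does not even need $d\to\infty$). But it fails at $p=1$, where $\delta^{p-1}$ does not vanish, and $p=1$ is included in the theorem. Your fallback, ``the same covariance estimate applied to the running maxima,'' is not a valid step: that estimate concerns fixed-time functionals of single coordinates, whereas $R_r^p$ is a maximum over time of a sum over coordinates, and the maximum does not decompose coordinatewise, so there is nothing to which the pairwise covariance bound can be applied. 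This is exactly where the paper introduces its key device: $\|S_j^{(d)}\|_p^p=T_j^{(d)}+Q_j^{(d)}$ with $T_j^{(d)}$ nondecreasing in $j$ (via Young's inequality, or the sign identity when $p=1$) and $Q_j^{(d)}$ a martingale. Monotonicity reduces the running maximum of $T$ to its block-endpoint value, where the pointwise result applies, while Doob's inequality together with the explicit estimate $\E(Q_n^{(d)})^2=O(n^p d^{-1})$ (this is where $d(n)\to\infty$ enters) controls $\sup_{t}|Q_{\nt}^{(d)}|$; the same decomposition also gives the uniform-in-$t$ convergence via Dini's theorem. To cover $p=1$ (and to substantiate your concentration claim at all) you need either this monotone-plus-martingale decomposition or a genuine substitute for concentration of the block maxima.
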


The paper is organised as follows. In Section~\ref{sec:3},
we provide the univariate and bivariate moment convergence theorems,
which serve as the main tools for proving the limit
theorem. Section~\ref{sec:uc} contains some auxiliary theorems and the
proof of the main result.

\section{Moment convergence theorem}
\label{sec:3}
We need the following results.
\begin{theorem}[Moment convergence theorem]
  \label{thm:mom_gene}
  Let $\eta, \eta_1, \eta_2, \ldots$ be independent identically
  distributed random variables with $\E\eta=\mu$ and
  $\var \eta = \sigma^2$, and $S_n = \eta_1+\ldots+\eta_n$,
  $n \geq 1$. Then
  \begin{displaymath}
    \E \bigg|\frac {S_n - n\mu}{\sigma \sqrt{n}} \bigg|^p
    \to M_p
    = 2^{p/2} \frac{1}{\sqrt{\pi}} \Gamma \bigg( \frac{p + 1}{2}
    \bigg),
  \end{displaymath}
  if $p \in (0,2)$ and for $p \geq 2$ if $\E |\eta|^p < \infty$.
\end{theorem}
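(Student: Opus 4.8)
The plan is to recognise the constant $M_p$ as the $p$-th absolute moment of a standard Gaussian variable $N\sim N(0,1)$, since
\[
\E|N|^p=\frac{2^{p/2}}{\sqrt\pi}\,\Gamma\!\Big(\frac{p+1}{2}\Big)=M_p
\]
is a standard Gaussian integral. Writing $Z_n=(S_n-n\mu)/(\sigma\sqrt n)$, the classical central limit theorem gives $Z_n\dto N$, and the continuous mapping theorem yields $|Z_n|^p\dto|N|^p$. The assertion to be proved is therefore that this convergence in distribution upgrades to convergence of expectations, $\E|Z_n|^p\to\E|N|^p$. The governing principle is that such an upgrade holds precisely when the family $\{|Z_n|^p\}_{n\ge1}$ is uniformly integrable; so the whole proof reduces to establishing this uniform integrability, after which the convergence of the means follows from the standard theorem that convergence in distribution together with uniform integrability implies convergence of the corresponding expectations.

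The range $p\in(0,2]$ is immediate. Since $\var(S_n)=n\sigma^2$, one has $\E|Z_n|^2=1$ for every $n$. For $p=2$ this already gives $\E|Z_n|^2=1=M_2$. For $p<2$ the uniform bound $\sup_n\E(|Z_n|^p)^{2/p}=\sup_n\E|Z_n|^2=1<\infty$ exhibits $\{|Z_n|^p\}$ as bounded in $L^{2/p}$ with $2/p>1$, and a family bounded in $L^r$ for some $r>1$ is uniformly integrable (de la Vall\'ee-Poussin). In this range $\E|\eta|^p<\infty$ is automatic from finite variance, so no extra hypothesis is needed.

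The substantive case is $p>2$, and this is the main obstacle: only $\E|\eta|^p<\infty$ is assumed, so one cannot bound a moment of $Z_n$ of order strictly above $p$ and conclude uniform integrability directly. I would establish it by a truncation of the centred summands $\zeta_i=\eta_i-\mu$. Fix a level $A\ge1$ and split $\zeta_i=\bar\zeta_i^{(A)}+\tilde\zeta_i^{(A)}$, where $\bar\zeta_i^{(A)}=\zeta_i\one_{|\zeta_i|\le A\sqrt n}-\E[\cdots]$ and $\tilde\zeta_i^{(A)}=\zeta_i\one_{|\zeta_i|>A\sqrt n}-\E[\cdots]$ are both recentred to mean zero, inducing $Z_n=\bar Z_n^{(A)}+\tilde Z_n^{(A)}$. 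For the tail part, Rosenthal's inequality (or the Marcinkiewicz--Zygmund inequality) bounds $\E|\tilde Z_n^{(A)}|^p$ by a constant multiple of $\E[|\zeta|^p\one_{|\zeta|>A\sqrt n}]+\big(\E[\zeta^2\one_{|\zeta|>A\sqrt n}]\big)^{p/2}$; the key observation is that these truncated tail moments are monotone decreasing in $n$, so their supremum over $n$ is attained at $n=1$ and tends to $0$ as $A\to\infty$ by dominated convergence. For the bounded part, each $\bar\zeta_i^{(A)}$ is bounded by $2A\sqrt n$ and hence possesses all moments; Rosenthal's inequality then gives, for a fixed $p'>p$, a bound on $\E|\bar Z_n^{(A)}|^{p'}$ that is uniform in $n$ (here it is essential that the $(\sum\E\bar\zeta_i^2)^{p'/2}$ term of Rosenthal is present to prevent the estimate from blowing up), whence $\{|\bar Z_n^{(A)}|^p\}_n$ is uniformly integrable.

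Combining the two parts yields uniform integrability of $\{|Z_n|^p\}$: using $\Prob{|Z_n|>K}\le K^{-p}\sup_n\E|Z_n|^p\to0$ (the uniform $L^p$-bound again coming from Rosenthal applied to the full sum) one controls the contribution of the bounded part on the event $\{|Z_n|>K\}$, while the tail part contributes at most $\sup_n\E|\tilde Z_n^{(A)}|^p$, which is made arbitrarily small by choosing $A$ large. With uniform integrability established in all ranges and $|Z_n|^p\dto|N|^p$, the desired convergence $\E|Z_n|^p\to M_p$ follows. I expect the bookkeeping of the truncation in the case $p>2$ --- verifying that the recentring terms are negligible, that the tail moments are uniformly small in $n$, and that the bounded part genuinely has a uniformly bounded moment of order strictly above $p$ --- to be the most delicate part; the remaining ingredients are routine consequences of the central limit theorem and classical moment inequalities.
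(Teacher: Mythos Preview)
Your argument is correct. For $p\in(0,2)$ it coincides with the paper's proof: both show that $\{|Z_n|^p\}$ is bounded in $L^r$ for some $r>1$ (you take $r=2/p$, the paper chooses any $r>1$ with $pr<2$) and deduce uniform integrability, after which the CLT and continuous mapping finish the job.

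For $p\ge 2$ the paper gives no argument of its own; it simply cites Theorem~7.5.1 of Gut's textbook. Your truncation-plus-Rosenthal scheme is essentially the classical proof behind that citation, and it parallels what the paper itself carries out later in the bivariate case (Theorem~\ref{them_mom_two_dimen}). One minor simplification: there, and in Gut, the summands are truncated at a \emph{fixed} level $A$ rather than at the $n$-dependent level $A\sqrt n$ you chose. With a fixed cut-off the Rosenthal bound for the tail part already carries the factor $n^{1-p/2}\le 1$, so $\sup_n\E|\tilde Z_n^{(A)}|^p$ is controlled directly by $\E[|\zeta|^p\one_{|\zeta|>A}]+(\E[\zeta^2\one_{|\zeta|>A}])^{p/2}$ without invoking any monotonicity in $n$, and the bounded part has $|\bar\zeta_i|\le 2A$ independently of $n$, making the $L^{p'}$ estimate immediate. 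Your $n$-dependent truncation works too, but the fixed-level version avoids a layer of bookkeeping. Either way, the combining step you outline (fix $A$ to make the tail small, then use the $L^{p'}$ bound on the truncated part together with the uniform $L^p$ bound on $Z_n$ to handle the rest) is sound.
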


\begin{proof}
  If $p \in (0,2)$, then
  \begin{displaymath}
      \sup_n \E \Biggl(\bigg|\frac{S_n-n\mu}{\sigma \sqrt{n}} \bigg|^p\Biggl)^r
      \leq \Biggl(\sup_n \E \bigg(\frac{S_n-n\mu}{\sigma \sqrt{n}} \bigg)^2\Bigg)^{\frac{pr}{2}}
      = 1 < \infty
  \end{displaymath}
  by choosing $r > 1$ and $pr < 2$, which implies that
  \begin{displaymath}
    \bigg\{\bigg|\frac{S_n-n\mu}{\sigma \sqrt{n}} \bigg|^p, \ n\geq 1\bigg\}
    \quad\text{is uniformly integrable}.
  \end{displaymath}
  Furthermore, Theorem
  \ref{thm:mom_gene} holds by the central limit theorem and the
  continuous mapping theorem, that is,
  \begin{displaymath}
    \bigg|\frac{S_n-n\mu}{\sigma \sqrt{n}} \bigg|^p
    \overset{d} \to |\sN(0,1)|^p\quad\text{as}\ n\to\infty.
  \end{displaymath}
  The case of $p \geq 2$ is proved in Theorem 7.5.1 from
  \cite{MR2977961}.
\end{proof}

\begin{lemma}[Marcinkiewicz–Zygmund inequality. 
See Corollary 3.8.2 in \cite{MR2977961}]
\label{lem_Mar}
Let $p \geq 1$. Suppose that $X, X_1, \ldots, X_n$ are
independent, identically distributed random variables with
mean $0$ and $\E |X|^p < \infty$.
Set $S_n = \sum_{k = 1}^{n} X_k$, $n \geq 1$.
Then there exists a constant $B_p$ depending only on $p$ such that
\begin{displaymath}
  \E |n^{-1/2}S_n|^p \leq
  \begin{cases}
    B_p n^{1-p/2} \E |X|^p, & \text{when $1 \leq p \leq 2$},\\
    B_p \E |X|^p, & \text{when $p \geq 2$}.
  \end{cases}
\end{displaymath}
\end{lemma}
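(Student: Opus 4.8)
The plan is to derive the two stated bounds from the core Marcinkiewicz--Zygmund inequality, which controls the $p$-th moment of a sum by the $p/2$-th moment of its quadratic variation $\sum_{k=1}^n X_k^2$, and then to dispose of the quadratic variation by elementary convexity arguments that split according to whether $p\le 2$ or $p\ge 2$.

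First I would establish (or, as the reference does, cite) the fundamental form of the inequality: for independent, mean-zero random variables $X_1,\ldots,X_n$ with $\E|X_k|^p<\infty$ and $p\ge 1$, there is a constant $B_p$ depending only on $p$ with
\[
  \E\Bigl|\sum_{k=1}^n X_k\Bigr|^p \le B_p\,\E\Bigl(\sum_{k=1}^n X_k^2\Bigr)^{p/2}.
\]
The standard route is symmetrization followed by Khintchine's inequality: one compares $S_n$ with $\sum_k(X_k-X_k')$ for an independent copy $(X_k')$ (using $\E X_k'=0$ and Jensen), notes that this difference is symmetric and hence distributed as $\sum_k\varepsilon_k(X_k-X_k')$ for independent Rademacher signs $\varepsilon_k$, and then applies Khintchine's inequality conditionally on the $X_k$ to bound $\E_\varepsilon|\sum_k\varepsilon_k X_k|^p$ by a constant times $(\sum_k X_k^2)^{p/2}$. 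This is the analytically substantive step and the main obstacle; the remaining reductions are routine.

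Second, I would bound the right-hand side by $\E|X|^p$, treating the two moment ranges separately. For $1\le p\le 2$ the map $t\mapsto t^{p/2}$ is concave on $[0,\infty)$ and vanishes at the origin, hence is subadditive, so $\bigl(\sum_k X_k^2\bigr)^{p/2}\le\sum_k|X_k|^p$; taking expectations and using that the $X_k$ are identically distributed yields $\E\bigl(\sum_k X_k^2\bigr)^{p/2}\le n\,\E|X|^p$. For $p\ge 2$ the same map is convex, so Jensen's inequality (equivalently the power-mean inequality) gives $\bigl(n^{-1}\sum_k X_k^2\bigr)^{p/2}\le n^{-1}\sum_k|X_k|^p$, whence $\E\bigl(\sum_k X_k^2\bigr)^{p/2}\le n^{p/2-1}\cdot n\,\E|X|^p=n^{p/2}\,\E|X|^p$.

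Finally I would combine these with the core inequality and rescale by $n^{-1/2}$. In the first range,
\[
  \E|n^{-1/2}S_n|^p = n^{-p/2}\,\E|S_n|^p \le B_p\, n^{-p/2}\cdot n\,\E|X|^p = B_p\, n^{1-p/2}\,\E|X|^p,
\]
and in the second range,
\[
  \E|n^{-1/2}S_n|^p \le B_p\, n^{-p/2}\cdot n^{p/2}\,\E|X|^p = B_p\,\E|X|^p,
\]
which are exactly the two asserted bounds, with the same constant $B_p$ throughout.
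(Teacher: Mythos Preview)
Your argument is correct and follows the standard route to the Marcinkiewicz--Zygmund bound: symmetrization plus Khintchine to obtain $\E|S_n|^p\le B_p\,\E\bigl(\sum_k X_k^2\bigr)^{p/2}$, followed by the subadditivity/convexity dichotomy for $t\mapsto t^{p/2}$ to split the cases $1\le p\le 2$ and $p\ge 2$. Each step is sound, and the final rescaling by $n^{-p/2}$ gives exactly the stated inequalities.

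As for comparison with the paper: there is nothing to compare. The paper does not prove this lemma at all; it merely quotes it from Corollary~3.8.2 of the cited reference and uses it as a black box. Your write-up therefore supplies strictly more than the paper does. If anything, you could trim it further to match the paper's treatment (a bare citation), but as a self-contained justification your outline is the textbook one and is entirely adequate.
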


\begin{theorem}[Bivariate moment convergence theorem]
  \label{them_mom_two_dimen}
  Let $(X_1, Y_1),\ldots,(X_n, Y_n)$ be independent copies of a
  centered $2p$-integrable random vector $(X, Y)$ with the covariance
  matrix $\Sigma$. Denote $S_n = X_1+\cdots+X_n$
  and $Z_n = Y_1+\cdots+Y_n$. Then
  \begin{equation}
    \label{equ_mom_two_dimen}
    \E \big| n^{-1/2}S_n \big|^p
    \big| n^{-1/2}Z_n\big|^p \to \E |\eta_1 \eta_2|^p
    \quad\text{as}\ n\to\infty,
  \end{equation}
  where $(\eta_1, \eta_2) \sim\mathcal{N} (0, \Sigma)$.
\end{theorem}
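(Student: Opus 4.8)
The plan is to derive \eqref{equ_mom_two_dimen} from the bivariate central limit theorem by upgrading convergence in distribution of $W_n:=|n^{-1/2}S_n|^p\,|n^{-1/2}Z_n|^p$ to convergence of its mean, the upgrade being provided by a uniform integrability argument. First I would establish the joint weak limit via the Cram\'er--Wold device: for every $(a,b)\in\R^2$ the variables $aX_i+bY_i$, $i\ge1$, are independent, identically distributed, centered, with variance $a^2\Sigma_{11}+2ab\Sigma_{12}+b^2\Sigma_{22}=\var(a\eta_1+b\eta_2)$, so the ordinary CLT gives $n^{-1/2}(aS_n+bZ_n)\dto a\eta_1+b\eta_2$ and hence $(n^{-1/2}S_n,n^{-1/2}Z_n)\dto(\eta_1,\eta_2)$. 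Applying the continuous mapping theorem to $(x,y)\mapsto|x|^p|y|^p$ yields $W_n\dto|\eta_1\eta_2|^p$.

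The substantive step is to show that the family $\{W_n:n\ge1\}$ is uniformly integrable; this is the main obstacle, because $(X,Y)$ is assumed only $2p$-integrable, so one cannot obtain uniform integrability from boundedness in $L^r$ for some $r>1$ (Cauchy--Schwarz or H\"older would require strictly more than $2p$ moments). The way around this is to reduce to the already-established univariate moment convergence. By the AM--GM inequality,
\begin{displaymath}
  0\le W_n\le\tfrac12\big(|n^{-1/2}S_n|^{2p}+|n^{-1/2}Z_n|^{2p}\big),
\end{displaymath}
and since the class of uniformly integrable families is stable under finite sums and under domination by nonnegative random variables, it suffices to show that $\{|n^{-1/2}S_n|^{2p}:n\ge1\}$ and $\{|n^{-1/2}Z_n|^{2p}:n\ge1\}$ are uniformly integrable.

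Consider $Q_n:=|n^{-1/2}S_n|^{2p}$. It is integrable for every $n$ by \Cref{lem_Mar} with exponent $2p\ge2$, since $\E|X|^{2p}<\infty$. It converges in distribution to $|\eta_1|^{2p}$ by the marginal CLT ($n^{-1/2}S_n\dto\eta_1\sim\mathcal N(0,\Sigma_{11})$) and the continuous mapping theorem. Moreover, by \Cref{thm:mom_gene} applied with exponent $2p$ to the i.i.d.\ centered variables $X_i$ (using $\E|X|^{2p}<\infty$), and writing $\Sigma_{11}=\var X$ (the degenerate case $\Sigma_{11}=0$, i.e.\ $X=0$ a.s., being trivial),
\begin{displaymath}
  \E Q_n=\Sigma_{11}^{\,p}\,\E\Big|\tfrac{S_n}{\sqrt{\Sigma_{11}}\,\sqrt n}\Big|^{2p}
  \xrightarrow[n\to\infty]{}\Sigma_{11}^{\,p}M_{2p}=\E|\eta_1|^{2p}.
\end{displaymath}
Now I would invoke the elementary fact that a sequence of nonnegative random variables which converges in distribution and whose expectations converge to the (finite) expectation of the limit is uniformly integrable: truncating, $Q_n\wedge K\dto|\eta_1|^{2p}\wedge K$ with $Q_n\wedge K\le K$ gives $\E[Q_n\wedge K]\to\E[|\eta_1|^{2p}\wedge K]$, so $\E[(Q_n-K)^+]=\E Q_n-\E[Q_n\wedge K]$ is small uniformly in $n$ for $K$ large, which bounds $\E[Q_n\one_{\{Q_n>2K\}}]\le2\E[(Q_n-K)^+]$. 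Hence $\{Q_n\}$ is uniformly integrable, and the same reasoning applies to $\{|n^{-1/2}Z_n|^{2p}\}$.

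Combining the two steps, $W_n\dto|\eta_1\eta_2|^p$ together with the uniform integrability of $\{W_n\}$ gives $\E W_n\to\E|\eta_1\eta_2|^p$, which is precisely \eqref{equ_mom_two_dimen}.
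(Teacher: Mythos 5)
Your proof is correct, but it takes a genuinely different route from the paper's. The paper establishes uniform integrability of $\{|n^{-1/2}S_n|^p|n^{-1/2}Z_n|^p\}$ by truncating each coordinate at a level $A$ (writing $X_k=X_k'+X_k''$ with a centered bounded part and a centered tail part, likewise for $Y_k$), expanding the product into four cross terms via the $c_r$ inequality (Lemma \ref{lem_cr}), and controlling each term with the Marcinkiewicz--Zygmund inequality (Lemma \ref{lem_Mar}) and Cauchy--Schwarz: the bounded parts are uniformly bounded in $L^2$ through their $4p$-th moments, while any term containing a tail part has expectation of order $\eps$ uniformly in $n$. You instead dominate the product by $\tfrac12\big(|n^{-1/2}S_n|^{2p}+|n^{-1/2}Z_n|^{2p}\big)$ via AM--GM and deduce uniform integrability of each marginal family from the univariate moment convergence theorem (Theorem \ref{thm:mom_gene} with exponent $2p$, which only needs the assumed $2p$-integrability) combined with the standard converse to Vitali's theorem: nonnegative variables converging in distribution whose means converge to the finite mean of the limit are uniformly integrable, which you correctly prove inline by truncation at level $K$. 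Both arguments are sound, and the weak-convergence step (Cram\'er--Wold plus continuous mapping) is common to both, being left implicit in the paper. Your version is shorter, avoids the truncation bookkeeping, and reuses a result already available in the paper, at the cost of invoking the converse-Vitali lemma; the paper's argument is more self-contained at the level of moment inequalities and yields the uniform integrability directly from explicit bounds, without appealing to the $2p$-th univariate moment convergence.
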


The proof relies on the following lemma.

\begin{lemma}[The $c_r$ inequality. See Theorem 2.2 in \cite{MR2977961}]
  \label{lem_cr}
  Let $r > 0$. Suppose that $\E |X|^r < \infty$ and
  $\E |Y|^r<\infty$. Then
  \begin{displaymath}
    \E |X + Y|^r \leq c_r (\E |X|^r + \E |Y|^r),
  \end{displaymath}
  where $c_r = 1$ when $r \leq 1$ and $c_r = 2^{r - 1}$ when $r \geq 1$.
\end{lemma}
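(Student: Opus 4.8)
The plan is to deduce the stated integral inequality from a pointwise numerical inequality and then integrate. Concretely, I would first prove that for all real numbers $a$ and $b$ one has $|a+b|^r \le c_r(|a|^r + |b|^r)$ with $c_r = 1$ for $r \le 1$ and $c_r = 2^{r-1}$ for $r \ge 1$, and then apply this with $a = X(\omega)$ and $b = Y(\omega)$ and take expectations. Monotonicity and linearity of the expectation then give $\E|X+Y|^r \le c_r(\E|X|^r + \E|Y|^r)$, the right-hand side being finite by the hypotheses $\E|X|^r<\infty$ and $\E|Y|^r<\infty$.

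For the pointwise inequality I would combine the triangle inequality $|a+b| \le |a| + |b|$ with the fact that $t \mapsto t^r$ is nondecreasing on $[0,\infty)$, obtaining $|a+b|^r \le (|a|+|b|)^r$. It then suffices to bound $(x+y)^r$ by $c_r(x^r+y^r)$ for nonnegative reals $x = |a|$ and $y = |b|$, and I would treat the two ranges of $r$ separately. When $r \ge 1$, convexity of $t\mapsto t^r$ on $[0,\infty)$ applied at the midpoint gives $((x+y)/2)^r \le (x^r+y^r)/2$, that is $(x+y)^r \le 2^{r-1}(x^r+y^r)$, yielding $c_r = 2^{r-1}$. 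When $0 < r \le 1$, I would instead use the subadditivity of the power function: assuming $x+y>0$ and setting $u = x/(x+y)$ and $v = y/(x+y)$, so that $u,v\in[0,1]$ and $u+v=1$, the elementary inequality $t^r \ge t$ for $t\in[0,1]$ gives $u^r + v^r \ge u + v = 1$; multiplying through by $(x+y)^r$ produces $x^r + y^r \ge (x+y)^r$, so $c_r = 1$ is admissible, the degenerate case $x=y=0$ being trivial.

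This is an elementary inequality, so there is no substantial obstacle; the only step meriting care is the subadditivity of $t\mapsto t^r$ in the regime $r\le 1$, which rests on $t^r \ge t$ for $t\in[0,1]$ rather than on convexity. Everything else follows directly from the triangle inequality, monotonicity and convexity of $t\mapsto t^r$, and monotonicity of the integral, while the integrability assumptions ensure both sides of the final bound are finite and hence the estimate is meaningful.
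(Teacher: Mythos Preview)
Your proof is correct and is the standard argument for the $c_r$-inequality. The paper does not give its own proof of this lemma; it simply cites Theorem~2.2 in Gut's textbook, so there is no in-paper argument to compare against.
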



\begin{proof}[Proof of Theorem \ref{them_mom_two_dimen}.]
  It suffices to show that
  \begin{displaymath}
    \big\{ | n^{-1/2}S_n |^p |
    n^{-1/2}Z_n |^p, n \geq 1 \big\}
  \end{displaymath}
  is uniformly integrable. Fix an $\eps > 0$. Choose $A$ and $A'$ 
  large enough to ensure that
  \begin{displaymath}
    \E |X|^{2p} \mathbf{1}_{|X| > A} < \eps,\quad\text{and}
    \quad \E |Y|^{2p} \mathbf{1}_{|Y| > A'} < \eps.
  \end{displaymath}

  Set 
  \begin{align*}
    X_k^{\prime}
    & = X_k \mathbf{1}_{|X_k| \leq A} - \E (X_k\mathbf{1}_{|X_k| \leq A}), \\
    X_k^{\prime \prime}
    & = X_k \mathbf{1}_{|X_k| > A} - \E (X_k \mathbf{1}_{|X_k| > A})
  \end{align*}
  and
  \begin{displaymath}
    S_n^{\prime} = \sum_{k = 1}^{n} X_k^{\prime}, \qquad
    S_n^{\prime \prime} = \sum_{k = 1}^{n} X_k^{\prime \prime}
  \end{displaymath}
  Similarly, define
  $Y_k^{\prime}$, $Y_k^{\prime \prime}$ and
  $Z_n^{\prime}$, $Z_n^{\prime \prime}$.
  Note that $\E X_k^{\prime} = \E X_k^{\prime \prime} = 0$,
  $X_k^{\prime} + X_k^{\prime \prime} =
  X_k$, $S_n^{\prime} + S_n^{\prime \prime} = S_n$
  and $\E Y_k^{\prime} = \E Y_k^{\prime \prime} = 0$,
  $Y_k^{\prime} + Y_k^{\prime \prime} =
  Y_k$, $Z_n^{\prime} + Z_n^{\prime \prime} = Z_n$.

  Let $a > 0$. Note that
  \begin{align*}
    \E | n^{-1/2}S_n^{\prime} |^p 
    | n^{-1/2}Z_n^{\prime} |^p
    &\mathbf{1}_{|n^{-1/2}S_{n}^{\prime}| |n^{-1/2}Z_{n}^{\prime}| > a}\\
    &\leq \frac{1}{a^p} \E
      | n^{-1/2}S_n^{\prime} |^{2p}
      | n^{-1/2}Z_n^{\prime} |^{2p}
      \mathbf{1}_{|n^{-1/2}S_{n}^{\prime}| |n^{-1/2}Z_{n}^{\prime}| > a} \\
    & \leq \frac{1}{a^p} \E
      | n^{-1/2}S_n^{\prime} |^{2p}
      | n^{-1/2}Z_n^{\prime} |^{2p} \\
    & \leq \frac{1}{a^p} \big(\E
      |n^{-1/2}S_{n}^{\prime}|^{4p}
      \E |n^{-1/2}Z_{n}^{\prime}|^{4p}\big)^{1/2} \\
    & = \frac{1}{a^p} B_{4p} \big(\E |X_1^{\prime}|^{4p}
      \E |Y_1^{\prime}|^{4p}\big)^{1/2} \\
    & \leq \frac{1}{a^p} B_{4p} \big((2A)^{4p} (2A')^{4p}\big)^{1/2},
  \end{align*}
  where the penultimate step follows from Lemma \ref{lem_Mar}. Also
  \begin{align*}
    \E |n^{-1/2}S_n^{\prime\prime}|^{p}
    |n^{-1/2}Z_n^{\prime\prime}|^{p}
    &\mathbf{1}_{|n^{-1/2}S_n^{\prime\prime}||n^{-1/2}Z_n^{\prime\prime}| > a}
      \leq \E|n^{-1/2}S_n^{\prime\prime}|^{p}
      |n^{-1/2}Z_n^{\prime\prime}|^{p}  \\
    & \leq \big(\E |n^{-1/2}S_n^{\prime\prime}|^{2p}
      \E |n^{-1/2}Z_n^{\prime\prime}|^{2p}\big)^{1/2} \\
    & \leq \big( B_{2p} B_{2p} \E
      |X_1^{\prime\prime}|^{2p}
      \E |Y_1^{\prime\prime}|^{2p}\big)^{1/2} 
      \leq B_{2p} 2^{2p} \eps,
  \end{align*}
  where the last inequality follows from Lemma \ref{lem_cr}, that is,
  \begin{align*}
    \E |X_1^{\prime \prime}|^{2p}
    & = \E \big|X_1 \mathbf{1}_{|X_1| > A}
      - \E X_1 \mathbf{1}_{|X_1| > A} \big|^{2p} \\
    & \leq 2^{2p - 1} \big( \E |X_1 \mathbf{1}_{|X_1| > A}|^{2p}
      + \E |X_1 \mathbf{1}_{|X_1| > A}|^{2p} \big)
      = 2^{2p} \E |X_1|^{2p} \mathbf{1}_{|X_1| > A} \leq 2^{2p}\eps.
  \end{align*}

  Similarly, 
  \begin{align}
    \label{equ_mom_3}
    \begin{split}
      \E | n^{-1/2}S_n^{\prime} |^p
      |n^{-1/2}Z_n^{\prime\prime}|^{p}
      &\mathbf{1}_{|n^{-1/2}S_{n}^{\prime}| |n^{-1/2}Z_n^{\prime\prime}| > a} 
        \leq \E|n^{-1/2}S_n^{\prime}|
        |n^{-1/2}Z_n^{\prime}|\\
      &\leq \big(\E | n^{-1/2}S_n^{\prime} |^{2p}
        \E |n^{-1/2}Z_n^{\prime\prime}|^{2p}\big)^{1/2} \\
      & \leq \big(B_{2p} B_{2p}
        \E |X_1^{\prime}|^{2p}
        \E |Y_1^{\prime\prime}|^{2p}\big)^{1/2} \\
      & = B_{2p} \big(\E |X_1^{\prime}|^{2p} \E |Y_1^{\prime\prime}|^{2p}\big)^{1/2} 
      \leq B_{2p} \big(2^{2p} (2A)^{2p} \eps\big)^{1/2},
    \end{split}
  \end{align}
  and 
  \begin{equation}
    \label{equ_mom_4}
    \E |n^{-1/2}S_n^{\prime\prime}|^{p}
    | n^{-1/2}Z_n^{\prime} |^p
    \mathbf{1}_{|n^{-1/2}S_n^{\prime\prime}| |n^{-1/2}Z_{n}^{\prime}| > a} 
    \leq B_{2p} \big(2^{2p} (2A')^{2p} \eps\big)^{1/2}.
  \end{equation}

  Hence, we conclude that
  \begin{align*}
    & \Big\{ | n^{-1/2}S_n^{\prime} |^p
      | n^{-1/2}Z_n^{\prime} |^p, n \geq 1 \Big\},
      \qquad \Big\{ | n^{-1/2}S_n^{\prime} |^p
      |n^{-1/2}Z_n^{\prime\prime}|^{p}, n \geq 1\Big\}, \\
    & \Big\{ |n^{-1/2}S_n^{\prime\prime}|^{p}
      | n^{-1/2}Z_n^{\prime} |^p, n \geq 1 \Big\},
      \qquad \Big\{ |n^{-1/2}S_n^{\prime\prime}|^{p}
      |n^{-1/2}Z_n^{\prime\prime}|^{p}, n \geq 1 \Big\},
  \end{align*}
  are uniformly integrable. By Lemma \ref{lem_cr},
  \begin{align*}
    |n^{-1/2}S_n|^p|n^{-1/2}Z_n|^p
    &=|n^{-1/2}(S_n'+S_n'')|^p|n^{-1/2}(Z_n'+Z_n'')|^p\\
    &\leq 2^{2p-2}\big(|n^{-1/2}S_n'|^p+|n^{-1/2}S_n''|^p\big)
      \big(|n^{-1/2}Z_n'|^p+|n^{-1/2}Z_n''|^p\big).
  \end{align*}
  Finally, the proof is completed by the fact that the sum of uniformly 
  integrable sequences is again uniformly integrable.
\end{proof}  

\section{Convergence of the $\ell_p$-metric of random walks}
\label{sec:uc}
The proof of Theorem \ref{the_pair_lp} relies on the following theorems,
while they follow the general scheme of \cite{MR4828862}, substantial 
adjustments are necessary to handle the $\ell_p$-case with $p\neq 2$.
\begin{theorem}
\label{thm_general_l_p}
Let $p \in [1, \infty)$. Consider a random walk with increments given by
\eqref{equ_spe_form}. Then
\begin{displaymath}
  n^{-p/2}\|S_{\nt}^{(d)}\|_p^p
  \overset{p} \to t^{p/2}\sigma^p M_p\quad\text{as}\ n\to\infty
\end{displaymath}
for all $t\in[0,1]$.
\end{theorem}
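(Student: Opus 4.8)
The plan is to establish the convergence in probability by pinning down the asymptotics of the first two moments of $U_{n,d}:=n^{-p/2}\|S_{\nt}^{(d)}\|_p^p$ and then invoking Chebyshev's inequality. The case $t=0$ is trivial, since both sides vanish, so I assume $t\in(0,1]$, in which case $\nt\to\infty$ and $\nt/n\to t$ as $n\to\infty$. First I would pass to coordinates: the $j$-th coordinate of the walk is $S_{\nt,j}^{(d)}=d^{-1/p}T_{n,j}$ with $T_{n,j}:=\sum_{i=1}^{\nt}\xi_{i,j}$, where for each $i$ the vector $(\xi_{i,1},\dots,\xi_{i,d})$ is an independent copy of $(\xi_1,\dots,\xi_d)$; the factor $d^{-1/p}$ exactly cancels the summation over the $d$ coordinates, so that
\begin{displaymath}
  U_{n,d}=\frac1d\sum_{j=1}^{d}W_{n,j},\qquad W_{n,j}:=\bigl|n^{-1/2}T_{n,j}\bigr|^{p}.
\end{displaymath}
Since $X_1^{(d)},X_2^{(d)},\dots$ are i.i.d., each $T_{n,j}$ has the law of $\zeta_1+\dots+\zeta_{\nt}$ for i.i.d.\ copies $\zeta_i$ of $\xi$; in particular the $W_{n,j}$ are identically distributed in $j$, though not independent.

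For the mean I would use Theorem~\ref{thm:mom_gene}: since $\E\xi=0$, $\var\xi=\sigma^2$ and $\xi$ is $p$-integrable (being $2p$-integrable), and $\nt/n\to t$,
\begin{displaymath}
  \E U_{n,d}=\E W_{n,1}=\Bigl(\frac{\nt}{n}\Bigr)^{p/2}\sigma^p\,\E\Bigl|\frac{\zeta_1+\dots+\zeta_{\nt}}{\sigma\sqrt{\nt}}\Bigr|^{p}\longrightarrow t^{p/2}\sigma^p M_p=:L.
\end{displaymath}
For the variance I would split $\var U_{n,d}=d^{-1}\var W_{n,1}+d^{-2}\sum_{j\ne k}\cov(W_{n,j},W_{n,k})$. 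The diagonal part is handled by the Marcinkiewicz--Zygmund inequality (Lemma~\ref{lem_Mar}, case $2p\ge2$): $\E W_{n,1}^2=(\nt/n)^{p}\,\E|\nt^{-1/2}T_{n,1}|^{2p}\le B_{2p}\,\E|\xi|^{2p}<\infty$ uniformly in $n$, so $d^{-1}\var W_{n,1}\to0$ because $d=d(n)\to\infty$.

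For a fixed pair $j\ne k$, the vectors $(\xi_{i,j},\xi_{i,k})$, $1\le i\le\nt$, are i.i.d.\ copies of a centered $2p$-integrable vector whose covariance matrix is $\sigma^2 I_2$ — the off-diagonal entry vanishing precisely because the coordinates of $X_1^{(d)}$ are uncorrelated — so Theorem~\ref{them_mom_two_dimen}, applied along $\nt\to\infty$, gives
\begin{displaymath}
  \E[W_{n,j}W_{n,k}]=\Bigl(\frac{\nt}{n}\Bigr)^{p}\E\bigl[|\nt^{-1/2}T_{n,j}|^{p}\,|\nt^{-1/2}T_{n,k}|^{p}\bigr]\longrightarrow t^{p}\,\E|\eta_1\eta_2|^{p}=t^{p}(\sigma^p M_p)^2=L^2,
\end{displaymath}
with $\eta_1,\eta_2$ independent $\mathcal N(0,\sigma^2)$. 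Combined with $\E W_{n,j}\,\E W_{n,k}\to L^2$ this shows $\cov(W_{n,j},W_{n,k})\to0$ for each pair.

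The step I expect to be the main obstacle is controlling the roughly $d^2$ off-diagonal terms simultaneously while $d\to\infty$: I need the last convergence to be \emph{uniform} over all coordinate pairs, for then $d^{-2}\sum_{j\ne k}|\cov(W_{n,j},W_{n,k})|\le\sup_{j\ne k}|\cov(W_{n,j},W_{n,k})|\to0$. I would obtain this from a uniform strengthening of Theorem~\ref{them_mom_two_dimen}, built from the fact that every pair $(\xi_j,\xi_k)$ has the same marginals and the same covariance: (i) the uniform-integrability estimates in the proof of Theorem~\ref{them_mom_two_dimen} involve only $\E|\xi|^{2p}$ and truncation levels of $\xi$, which are common to all pairs, so $\{W_{n,j}W_{n,k}:n\ge1,\ j\ne k\}$ is uniformly integrable; and (ii) writing the increment characteristic function as $\psi_{j,k}(a,b)=1-\tfrac{\sigma^2}{2}(a^2+b^2)+\rho_{j,k}(a,b)$, the bound $|e^{ix}-1-ix+x^2/2|\le\min(|x|^3/6,\,x^2)$ together with a truncation using only $\E\xi^2<\infty$ and the common marginal yields $\sup_{j\ne k}|\rho_{j,k}(a,b)|=o(a^2+b^2)$, so the characteristic functions of $(n^{-1/2}T_{n,j},n^{-1/2}T_{n,k})$ converge to $e^{-t\sigma^2(u^2+v^2)/2}$ uniformly in $(j,k)$ and uniformly for $(u,v)$ in compact sets. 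Uniform weak convergence together with the uniform integrability then force $\sup_{j\ne k}|\E[W_{n,j}W_{n,k}]-L^2|\to0$, whence $\var U_{n,d}\to0$. Since also $\E U_{n,d}\to L$, Chebyshev's inequality gives $U_{n,d}\overset{p}{\to}L=t^{p/2}\sigma^p M_p$, which is the assertion.
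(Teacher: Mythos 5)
Your argument is essentially the paper's proof: a second-moment/Chebyshev computation in which the diagonal term is controlled by the Marcinkiewicz--Zygmund inequality (the paper's $A_1$), the off-diagonal covariances by the bivariate moment convergence Theorem~\ref{them_mom_two_dimen} (the paper's $A_2$), and the mean by the univariate moment convergence Theorem~\ref{thm:mom_gene} (the paper's $A_3$). The only real difference is that you explicitly worry about uniformity of the covariance convergence over the $O(d^2)$ coordinate pairs (needed since only common marginals and uncorrelatedness are assumed and $d=d(n)$ changes with $n$), a point the paper passes over by simply asserting $A_2\to0$; your sketched uniform strengthening of Theorem~\ref{them_mom_two_dimen}, resting on the fact that both the uniform-integrability bounds and the characteristic-function estimates depend only on the common marginal law and the covariance $\sigma^2 I_2$, is a sound way to close that point.
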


\begin{proof}
  Without loss of generality, let $t=1$.
  By the definition of convergence in probability, we need to
  verify that
  \begin{equation}
    \label{equ_aim_lp}
    \Prob{\bigg|n^{-p/2}\sum_{i =1}^{d}|S_{n,i}^{(d)}|^p
      - \sigma^p M_p\bigg| > \eps}
    \to 0 \qquad \text{as} \ n\to\infty.
  \end{equation}
  Markov's inequality implies that \eqref{equ_aim_lp} is bounded above
  by
  \begin{align*}
    &\eps^{-2}\E
      \bigg(n^{-p/2}\sum_{i=1}^{d}|S_{n,i}^{(d)}|^p
      -\sigma^pM_p\bigg)^2\\
    &\hspace{0.5cm}=\eps^{-2}\Bigg(\E\bigg(n^{-p/2}\sum_{i=1}^{d}|S_{n,i}^{(d)}|^p\bigg)^2
      +\sigma^{2p}M_p^2-2\sigma^pM_p
      \E\bigg(n^{-p/2}\sum_{i=1}^{d}|S_{n,i}^{(d)}|^p\bigg)\Bigg)\\
    &\hspace{0.5cm}=\eps^{-2}\Bigg(n^{-p}d
      \E |S_{n,1}^{(d)}|^{2p}
      +n^{-p}\sum_{1\leq i\neq j\leq d}\E
       |S_{n,i}^{(d)}|^{p}|S_{n,j}^{(d)}|^{p}
      +\sigma^{2p}M_p^2-2 n^{-p/2}\sigma^p
      M_pd\E |S_{n,1}^{(d)}|^{p}\Bigg)\\
    &\hspace{0.5cm}=\eps^{-2}(A_1+A_2+A_3),
  \end{align*}
  where
  \begin{displaymath}
    A_1=n^{-p}d\E|S_{n,1}^{(d)}|^{2p},
  \end{displaymath}
  \begin{displaymath}
    A_2=n^{-p}\sum_{1\leq i\neq j\leq d}\cov\big(
    |S_{n,i}^{(d)}|^{p},
    |S_{n,j}^{(d)}|^{p}\big),
  \end{displaymath}
  and
  \begin{displaymath}
    A_3=n^{-p}d(d-1)\big(\E
    |S_{n,1}^{(d)}|^{p}\big)^2
    +\sigma^{2p}M_p^2-2n^{-p/2} \sigma^p
    M_pd\E |S_{n,1}^{(d)}|^{p}.
  \end{displaymath}

  Let $(\xi_{1}^{(k)},\ldots,\xi_{d}^{(k)})$, $1\leq k\leq n$, be independent 
  copies of $(\xi_1,\ldots,\xi_d)$.
  Denote
  \begin{displaymath}
    b_{nij,p}=\cov\Big(n^{-p/2}
    \big|\xi_{i}^{(1)}+\xi_{i}^{(2)}+\cdots+\xi_{i}^{(n)}\big|^p,n^{-p/2}
    \big|\xi_{j}^{(1)}+\xi_{j}^{(2)}+\cdots+\xi_{j}^{(n)}\big|^p\Big)
  \end{displaymath}
  for all $1\leq i\neq j\leq d$.
  Thus, by Lemma \ref{lem_Mar}, 
  \begin{displaymath}
    A_1=d^{-1}\E n^{-p}|\xi_{1}+\xi_{1}^{(2)}+\cdots+\xi_{1}^{(n)}|^{2p}\leq
    d^{-1}B_{2p}\E|\xi|^{2p},
  \end{displaymath}
  where $B_{2p}$ is a constant depending
  only on $p$, and the term $A_1$ converges to 0 as $d\to\infty$.
  
  By Theorem \ref{them_mom_two_dimen}, the term $A_2$ converges to 0
  as $n\to\infty$ since $\lim_{n\to\infty}b_{ndd,p}=0$.
  
  Furthermore, the term $A_3$ is bounded above by
  \begin{align*}
    n^{-p}
    &d^2\big(\E  |S_{n,1}^{(d)}|^{p}\big)^2
      - n^{-p/2}\sigma^p
      M_pd\E |S_{n,1}^{(d)}|^{p}
      +\sigma^{2p}M_p^2-n^{-p/2}\sigma^p
      M_pd\E |S_{n,1}^{(d)}|^{p}\\
    &=n^{-p/2}d\E |S_{n,1}^{(d)}|^{p}
      \Big(n^{-p/2}d\E |S_{n,1}^{(d)}|^{p}
      -\sigma^p M_p\Big)
      +\sigma^p M_p\Big(\sigma^p M_p-n^{-p/2}
      d\E |S_{n,1}^{(d)}|^{p}\Big)\\
    &=\Big(n^{-p/2}d\E |S_{n,1}^{(d)}|^{p}
      -\sigma^p M_p\Big)^2,
  \end{align*}
  which converges to 0 as $n\to\infty$ by the moment convergence
  theorem.
\end{proof}
\begin{theorem}[Uniform convergence of the $\ell_p$-norm of random 
    walk when $p > 1$]
  \label{thm_p_unif}
  Let $p \in (1 ,\infty)$. Consider a random walk with increments given by
  \eqref{equ_spe_form}. Then
  \begin{align*}
    \sup_{t \in [0,1]} \Big| n^{-p/2}\|S_{\nt}^{(d)}\|_{p}^{p}
    - t^{p/2} \sigma^p M_p \Big|
    \overset{p}{\to} 0 \qquad \text{as} \ n\to\infty.
  \end{align*}
\end{theorem}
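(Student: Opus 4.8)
My plan is to bootstrap the pointwise convergence of Theorem~\ref{thm_general_l_p} to the uniform statement by supplementing it with an equicontinuity estimate for the random step functions $t\mapsto g_n(t):=n^{-p/2}\|S_{\nt}^{(d)}\|_p^p$ that is uniform in $n$ (and in $d=d(n)$), in the spirit of the Arzel\`a--Ascoli argument. Write $g(t):=t^{p/2}\sigma^pM_p$; for $N\in\N$ put $\delta:=1/N$ and $t_j:=j/N$ ($j=0,\dots,N$), and start from
\begin{displaymath}
  \sup_{t\in[0,1]}|g_n(t)-g(t)|
  \le \max_{0\le j\le N}|g_n(t_j)-g(t_j)|
  +\sup_{|t-s|\le\delta}|g_n(t)-g_n(s)|
  +\sup_{|t-s|\le\delta}|g(t)-g(s)|.
\end{displaymath}
The first term goes to $0$ in probability by Theorem~\ref{thm_general_l_p} at the finitely many points $t_0,\dots,t_N$, and the third goes to $0$ as $\delta\to0$ since $g$ is uniformly continuous on $[0,1]$. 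Hence everything reduces to estimating the oscillation term $\sup_{|t-s|\le\delta}|g_n(t)-g_n(s)|$ uniformly in $n$.

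I would control the oscillation with the reverse triangle inequality for $\|\cdot\|_p$ together with the elementary bound $|x^p-y^p|\le p\max(x,y)^{p-1}|x-y|$ (valid for $x,y\ge0$, $p\ge1$). For $s,t$ in a common block $[t_{j-1},t_j]$,
\begin{displaymath}
  \bigl|\,\|S_{\nt}^{(d)}\|_p-\|S_{\lfloor nt_{j-1}\rfloor}^{(d)}\|_p\,\bigr|\le R_{n,j}
  :=\max_{\lfloor nt_{j-1}\rfloor\le m\le\lfloor nt_j\rfloor}\bigl\|S_m^{(d)}-S_{\lfloor nt_{j-1}\rfloor}^{(d)}\bigr\|_p ,
\end{displaymath}
and with $U_n:=\max_{0\le m\le n}n^{-1/2}\|S_m^{(d)}\|_p$ this gives $|g_n(t)-g_n(t_{j-1})|\le p\,U_n^{p-1}\,n^{-1/2}R_{n,j}$; covering also the case where $s,t$ straddle one grid point yields $\sup_{|t-s|\le\delta}|g_n(t)-g_n(s)|\le 2p\,U_n^{p-1}\max_{1\le j\le N}n^{-1/2}R_{n,j}$. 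It therefore suffices to prove that $\{U_n^{p-1}\}_n$ is bounded in probability and that $\max_{1\le j\le N}n^{-1/2}R_{n,j}\to0$ in probability as $N\to\infty$, uniformly in $n$.

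Both are consequences of coordinatewise maximal inequalities after writing $\|S_m^{(d)}\|_p^p=d^{-1}\sum_{i=1}^d|T_{m,i}|^p$ with $T_{m,i}=\xi_i^{(1)}+\dots+\xi_i^{(m)}$. Since $(|T_{m,i}|)_m$ is a nonnegative submartingale, Doob's $L^p$-maximal inequality --- available precisely because $p>1$ --- gives $\E U_n^p\le d^{-1}\sum_i\E\max_m|n^{-1/2}T_{m,i}|^p\le(p/(p-1))^p\,\E|n^{-1/2}T_{n,1}|^p$, which stays bounded by the moment convergence theorem (Theorem~\ref{thm:mom_gene}); hence $\sup_n\E U_n^p<\infty$ and $\{U_n^{p-1}\}_n$ is tight. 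For $R_{n,j}$, Jensen's inequality gives $n^{-p}R_{n,j}^{2p}\le d^{-1}\sum_i\max_m|n^{-1/2}(T_{m,i}-T_{\lfloor nt_{j-1}\rfloor,i})|^{2p}$; applying Doob's $L^{2p}$-inequality inside the block and then the Marcinkiewicz--Zygmund inequality (Lemma~\ref{lem_Mar}, which uses $\E|\xi|^{2p}<\infty$) bounds $\E(n^{-1/2}R_{n,j})^{2p}$ by a constant times $\bigl((\lfloor nt_j\rfloor-\lfloor nt_{j-1}\rfloor)/n\bigr)^p\le (2/N)^p$, uniformly in $j$ and in $n\ge N$. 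A union bound over the $N$ blocks then gives $\Prob{\max_{1\le j\le N}n^{-1/2}R_{n,j}>\eta}\le C\eta^{-2p}N^{1-p}\to0$ as $N\to\infty$, uniformly in $n$. Assembling these pieces --- taking $\limsup_n$ for fixed $N$, then letting $N\to\infty$ --- completes the argument.

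The crux, and the only genuinely delicate step, is the last display: a naive $L^p$ estimate on the per-block oscillation, summed over the $\lceil1/\delta\rceil$ blocks, diverges when $1<p<2$, so the argument has to be run at moment order $2p$ (exactly the integrability assumed on $\xi$) in order to produce the factor $N^{1-p}$, which decays only because $p>1$. The hypothesis $p>1$ is also what makes Doob's $L^p$-maximal inequality applicable to $U_n$, which is why this theorem --- in contrast to Theorem~\ref{thm_general_l_p} --- excludes $p=1$.
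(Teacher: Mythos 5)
Your proposal is correct, but it follows a genuinely different route from the paper's proof. The paper splits $\|S_n^{(d)}\|_p^p=T_n^{(d)}+Q_n^{(d)}$ as in \eqref{eq:TQ_p}, shows via Young's inequality that $T_n^{(d)}$ is pathwise nondecreasing in $n$ and that $Q_n^{(d)}$ is a martingale; uniformity for the $T$-part then comes from a Dini-type argument (pointwise convergence of monotone functions to a continuous limit is automatically uniform), while $\sup_{t\in[0,1]}|Q_{\nt}^{(d)}|$ is controlled by Doob's inequality together with an explicit computation of $\E (Q_n^{(d)})^2$. You instead run a direct discretization/equicontinuity argument: pointwise convergence at the grid points from Theorem~\ref{thm_general_l_p}, plus an oscillation bound obtained from the reverse triangle inequality, the elementary bound $|x^p-y^p|\le p\max(x,y)^{p-1}|x-y|$, Doob's $L^p$- and $L^{2p}$-maximal inequalities applied coordinatewise to the submartingales $|T_{m,i}|$, and the Marcinkiewicz--Zygmund inequality (Lemma~\ref{lem_Mar}). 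The hypothesis $p>1$ enters your argument through Doob's $L^p$ inequality for $U_n$ and through the decaying factor $N^{1-p}$, whereas in the paper it enters through the convexity step that makes $T_n^{(d)}$ monotone; both arguments use exactly the assumed $2p$-th moments and only uncorrelatedness of the coordinates (all your estimates are purely per-coordinate). What the paper's decomposition buys is reusability: the martingale bound \eqref{equ_Q_diff_unif} is cited again in the proof of Theorem~\ref{equ_unif_lp}, and the same $T/Q$ scheme extends to $p=1$ (Theorem~\ref{cor_p_unif}) with the sign function replacing $p\,x|x|^{p-2}$. Your chaining argument is more self-contained and avoids computing $\E (Q_n^{(d)})^2$, at the price of slightly more delicate bookkeeping in the final limit passage (the tightness level for $U_n^{p-1}$ must be fixed before letting $N\to\infty$), which you have essentially right.
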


\begin{proof}
  For all $p > 1$,
  \begin{displaymath}
    \|S_n^{(d)} \|_p^p =
    \sum_{i = 1}^{d} |S_{n,i}^{(d)}|^p\\
    =T_n^{(d)} + Q_n^{(d)},
  \end{displaymath}
  where
  \begin{equation}
    \label{eq:TQ_p}
    T_n^{(d)}
    = \sum_{i = 1}^{d} |S_{n,i}^{(d)}|^p
    - p \sum_{i = 1}^{d} \sum_{j = 1}^{n} X_{j,i}^{(d)}
    S_{j-1,i}^{(d)}
    |S_{j-1,i}^{(d)}|^{p - 2},\quad
    Q_n^{(d)}
    = p \sum_{i = 1}^{d} \sum_{j = 1}^{n} X_{j,i}^{(d)}
    S_{j-1,i}^{(d)}
    |S_{j-1,i}^{(d)}|^{p - 2}.
  \end{equation}

  For all $n \in \mathbb{N}$,
  \begin{align*}
    T_n^{(d)} - T_{n - 1}^{(d)}
    &= \sum_{i = 1}^{d} \Big(|S_{n,i}^{(d)}|^p
      - p \sum_{j = 1}^{n} X_{j,i}^{(d)} S_{j-1,i}^{(d)}
      |S_{j-1,i}^{(d)} |^{p - 2} 
      - |S_{n-1,i}^{(d)}|^p
      + p \sum_{j = 1}^{n - 1} X_{j,i}^{(d)}  S_{j-1,i}^{(d)}
      |S_{j-1,i}^{(d)} |^{p - 2} \Big) \\
    & = \sum_{i = 1}^{d} \Big( |S_{n,i}^{(d)}|^p
      - |S_{n-1,i}^{(d)}|^p -
      p X_{n,i}^{(d)} S_{n-1,i}^{(d)} 
      | S_{n-1,i}^{(d)} |^{p - 2}\Big)\\
    &=\sum_{i = 1}^{d} \big(|S_{n,i}^{(d)}|^p - |S_{n-1,i}^{(d)}|^p
      - p (S_{n,i}^{(d)} - S_{n-1,i}^{(d)}) S_{n-1,i}^{(d)} |S_{n-1,i}^{(d)}|^{p - 2}\big).
  \end{align*}
  The generic term of this sum is
  \begin{align*}
    |S_{n,i}^{(d)}|^p - |S_{n-1,i}^{(d)}|^p 
    &- p (S_{n,i}^{(d)} - S_{n-1,i}^{(d)}) S_{n-1,i}^{(d)} |S_{n-1,i}^{(d)}|^{p - 2}\\
    &= |S_{n,i}^{(d)}|^p - |S_{n-1,i}^{(d)}|^p - p S_{n,i}^{(d)}  S_{n-1,i}^{(d)} |S_{n-1,i}^{(d)}|^{p - 2} + p |S_{n-1,i}^{(d)}|^{p} \\
    & = |S_{n,i}^{(d)}|^p + (p - 1) |S_{n-1,i}^{(d)}|^{p} - p S_{n,i}^{(d)}  S_{n-1,i}^{(d)} |S_{n-1,i}^{(d)}|^{p - 2} \\ 
    & \geq |S_{n,i}^{(d)}|^p + (p - 1) |S_{n-1,i}^{(d)}|^{p} - p \bigg( \frac{|S_{n,i}^{(d)}|^p}{p}
      +\frac{|S_{n-1,i}^{(d)}|^p}{p / (p - 1)} \bigg) = 0,
  \end{align*}
  where the last inequality follows from
  $xy \leq \frac{x^p}{p} + \frac{y^q}{q}$ with
  $\frac{1}{p} + \frac{1}{q} = 1$, for all $p, q > 1$ and $x, y > 0$.
  Hence, $T_n^{(d)} - T_{n - 1}^{(d)} \geq 0$.
  Thus, the sequence $T_n^{(d)}$ is monotone increasing.

  Next, $Q_n^{(d)}$ is a martingale, since
  \begin{align*}
    \E \Big(Q_n^{(d)} - Q_{n - 1}^{(d)} \ \Big| \ \sF_{n - 1}^{(d)}\Big)
    &= \E \Bigg(\sum_{i = 1}^{d} p X_{n,i}^{(d)}
      S_{n-1,i}^{(d)}
      \big| S_{n-1,i}^{(d)} \big|^{p - 2}
      \ \bigg| \ \sF_{n - 1}^{(d)} \Bigg) \\
    &= p S_{n-1,i}^{(d)} 
      \big| S_{n-1,i}^{(d)} \big|^{p - 2}
      d^{1-1/p} \E \xi= 0,
  \end{align*}
  where $\mathcal{F}_{n - 1}^{(d)}$ is the $\sigma$-algebra generated
  by $X_1^{(d)}, \ldots, X_{n - 1}^{(d)}$. Then, by Doob's inequality,
  \begin{equation}
    \label{equ_Q_diff_unif}
    \mathbf{P} \Big\{ \sup_{t \in [0, 1]} |Q_{\nt}^{(d)}| \geq n^{p /2} \epsilon \Big\}
    \leq n^{-p} \eps^{-2}\E (Q_n^{(d)})^2.
  \end{equation}
  The second moment of $Q_{n}^{(d)}$ is calculated as follows,
  \begin{align*}
    & \E (Q_n^{(d)})^2
      = p^2\sum_{i = 1}^{d}\sum_{j = 1}^{n}\sum_{i' = 1}^{d}\sum_{j' = 1}^{n}
      \E \Big( X_{j,i}^{(d)} X_{j',i'}^{(d)}
      S_{j-1,i}^{(d)}|S_{j-1,i}^{(d)}|^{p - 2}S_{j'-1,i'}^{(d)}
      |S_{j'-1,i'}^{(d)}|^{p - 2} \Big) \\
    & = p^2 \sum_{i = 1}^{d} \sum_{j = 1}^{n} \E (X_{j,i}^{(d)})^2
      \E |S_{j-1,i}^{(d)}|^{2p - 2} \\
    & = p^2 d\E (X_{1,1}^{(d)})^2
      \sum_{j = 1}^{n} (\sqrt{j - 1})^{2p - 2}
      \Big(\E (X_{1,1}^{(d)})^2 \Big)^{p - 1}
      \E \Bigg| \frac{\sum_{l = 1}^{j - 1} X_{l,1}^{(d)}}
      {\sqrt{j - 1} \sqrt{\E (X_{1, 1}^{(d)})^2}} \Bigg|^{2p - 2} \\
    & \leq p^2 d\E (X_{1,1}^{(d)})^2
      \sum_{j = 1}^{N(\delta)} (\sqrt{j - 1})^{2p - 2} \Big(\E(X_{1,1}^{(d)})^2 \Big)^{p - 1}
      \E \Bigg| \frac{\sum_{l = 1}^{j - 1} X_{l,1}^{(d)}}
      {\sqrt{j - 1} \sqrt{\E (X_{1, 1}^{(d)})^2}} \Bigg|^{2p - 2} \\
    & + p^2 d\E (X_{1,1}^{(d)})^2
      \sum_{j = 1}^{n} (\sqrt{j - 1})^{2p - 2}
      \Big(\E (X_{1,1}^{(d)})^2 \Big)^{p - 1} (M_{2p-2}+ \delta),
  \end{align*}
  where the last inequality is implied by Theorem
  \ref{thm:mom_gene}, for all $\delta>0$, there exists an integer $N(\delta)$
  such that for all $j\geq N(\delta)$,
  \begin{displaymath}
    M_{2p-2}-\delta\leq\E \Bigg| \frac{\sum_{l = 1}^{j - 1} X_{l,1}^{(d)}}
    {\sqrt{j - 1} \sqrt{\E (X_{1, 1}^{(d)})^2}} \Bigg|^{2p - 2}\leq M_{2p-2}+\delta.
  \end{displaymath}
  Hence, 
  \begin{align*}
    \E (Q_n^{(d)})^2 &\leq p^2 d
      \E (X_{1,1}^{(d)})^2 \sum_{j = 1}^{N(\delta)} 
      \E \Big| \sum_{l = 1}^{j - 1} X_{l,1}^{(d)}\Big|^{2p - 2} 
      + p^2 (M_{2p-2}+\delta) n^p d
      \big(\E (X_{1,1}^{(d)})^2 \big)^p \\
    & \leq p^2d\E(X_{1,1}^{(d)})^2N(\delta)B_{2p-2}
    \max(N(\delta)^{p-1},N(\delta))\E|X_{1,1}^{(d)}|^{2p-2}\\
    &\hspace{8cm}+ p^2 (M_{2p-2}+\delta) n^p d
      \big(\E (X_{1,1}^{(d)})^2 \big)^p,
  \end{align*}
  where Lemma \ref{lem_Mar} is used to bound the first term and $B_{2p-2}$ 
  is a constant which depends on $p$. The last step is bounded above by
  \begin{displaymath}
    p^2N(\delta)B_{2p-2}\max(N(\delta)^{p-1},N(\delta))d^{-1}
    \E\xi^2\E|\xi|^{2p-2}
    +p^2cn^pd^{-1}\big(\E\xi^2\big)^p.
  \end{displaymath}
   For $p\geq 2$, by Lemma \ref{lem_Mar}, there is an 
   alternative way to bound $\E(Q_n^{(d)})^2$, that is,
   \begin{align*}
     \E(Q_n^{(d)})^2=p^2 \sum_{i = 1}^{d} \sum_{j = 1}^{n} \E (X_{j,i}^{(d)})^2
      \E |S_{j-1,i}^{(d)}|^{2p - 2}
     &\leq p^2dn\E (X_{1,1}^{(d)})^2
     B_{2p-2}n^{p-1}\E|X_{1,1}^{(d)}|^{2p-2}\\
     &=p^2n^pB_{2p-2}d^{-1}\E\xi^2\E|\xi|^{2p-2}.
   \end{align*}
  Then we conclude that
  \begin{displaymath}
    n^{-p} \epsilon^{-2}\E (Q_n^{(d)})^2 \to 0
    \qquad \text{as} \ n\to\infty,
  \end{displaymath}
  and
  \begin{displaymath}
    n^{-p/2}\sup_{t \in [0, 1]}|Q_{\nt}^{(d)}|
    \overset{p} \to 0 \qquad \text{as} \ n\to\infty.
  \end{displaymath}

  By Theorem \ref{thm_general_l_p},
  \begin{displaymath}
    n^{-p/2}T_{\nt}^{(d)}
    = n^{-p / 2}\big(\|S_{\nt}^{(d)} \|_p^p - Q_{\nt}^{(d)}\big)
    \overset{p} \to t^{p / 2} \sigma^p M_p
    \qquad \text{as} \ n\to\infty,
  \end{displaymath}
  for all $t \in [0, 1]$. By monotonicity of the function
  $t \mapsto T_{\nt}^{(d)}$, Dini's theorem yields that
  \begin{displaymath}
    \sup_{t \in [0, 1]} \Big| n^{-p/2}T_{\nt}^{(d)}
    - t^{p / 2} \sigma^p M_p \Big|
    \overset{p} \to 0 \qquad \text{as} \ n\to\infty.
  \end{displaymath}
  Therefore,
  \begin{displaymath}
    \sup_{t \in [0,1]} \Big| n^{-p/2}\|S_{\nt}^{(d)}\|_{p}^{p}
    - t^{p/2} \sigma^p M_p \Big| 
    \leq \sup_{t \in [0, 1]} \Big| n^{-p/2}T_{\nt}^{(d)}
    - t^{p / 2} \sigma^p M_p \Big|
    + n^{-p/2}\sup_{t \in [0, 1]} |Q_{\nt}^{(d)}| \overset{p}\to 0,
  \end{displaymath}
  which completes the proof.
\end{proof}

\begin{theorem}[Uniform convergence for the $\ell_1$-norm of random 
    walk]
  \label{cor_p_unif}
  Consider a random walk with increments given by \eqref{equ_spe_form}.
  Then
  \begin{align*}
    \sup_{t \in [0,1]} \Big| n^{-1/2}\|S_{\nt}^{(d)}\|_1
    - t^{1/2} \sigma M_1 \Big|
    \ \overset{p}{\to} 0 \qquad \text{as} \ n\to\infty.
  \end{align*}
\end{theorem}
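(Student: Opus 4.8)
The plan is to follow the scheme of the proof of Theorem~\ref{thm_p_unif}, the only change being the linearization used to peel off a monotone part. Since the derivative of $|x|$ is discontinuous at $0$, one cannot literally put $p=1$ in \eqref{eq:TQ_p}, but the one-sided (Tanaka-type) inequality $|a+b|\geq \operatorname{sgn}(a)(a+b)=|a|+\operatorname{sgn}(a)b$, valid for all $a,b\in\R$ with the convention $\operatorname{sgn}(0)=0$, is exactly what is needed. Concretely, I would set
\begin{displaymath}
  T_n^{(d)}=\sum_{i=1}^{d}|S_{n,i}^{(d)}|-\sum_{i=1}^{d}\sum_{j=1}^{n}\operatorname{sgn}(S_{j-1,i}^{(d)})X_{j,i}^{(d)},
  \qquad
  Q_n^{(d)}=\sum_{i=1}^{d}\sum_{j=1}^{n}\operatorname{sgn}(S_{j-1,i}^{(d)})X_{j,i}^{(d)},
\end{displaymath}
so that $\|S_n^{(d)}\|_1=T_n^{(d)}+Q_n^{(d)}$.

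First I would check that $n\mapsto T_n^{(d)}$ is nondecreasing: the increment $T_n^{(d)}-T_{n-1}^{(d)}$ equals $\sum_{i=1}^{d}\big(|S_{n,i}^{(d)}|-|S_{n-1,i}^{(d)}|-\operatorname{sgn}(S_{n-1,i}^{(d)})X_{n,i}^{(d)}\big)$, and each summand is nonnegative by the one-sided inequality above with $a=S_{n-1,i}^{(d)}$, $b=X_{n,i}^{(d)}$. Next, $Q_n^{(d)}$ is an $\sF_n^{(d)}$-martingale because $\operatorname{sgn}(S_{j-1,i}^{(d)})$ is $\sF_{j-1}^{(d)}$-measurable and $\E\xi=0$. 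For the second moment, orthogonality of martingale increments kills all terms with $j\neq j'$, and the fact that distinct components of a single increment are uncorrelated and centered kills the terms with $j=j'$, $i\neq i'$; using $\operatorname{sgn}(\cdot)^2\leq 1$, independence of $\operatorname{sgn}(S_{j-1,i}^{(d)})$ from $X_{j,i}^{(d)}$, and $\E (X_{1,1}^{(d)})^2=d^{-2}\sigma^2$, this leaves
\begin{displaymath}
  \E (Q_n^{(d)})^2=\sum_{i=1}^{d}\sum_{j=1}^{n}\E\big[\operatorname{sgn}(S_{j-1,i}^{(d)})^2\,(X_{j,i}^{(d)})^2\big]\leq dn\,\E (X_{1,1}^{(d)})^2=\frac{n\sigma^2}{d},
\end{displaymath}
hence $n^{-1}\E(Q_n^{(d)})^2\leq \sigma^2/d\to 0$ as $n\to\infty$. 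Doob's inequality then gives, as in \eqref{equ_Q_diff_unif}, $\Prob{\sup_{t\in[0,1]}|Q_{\nt}^{(d)}|\geq n^{1/2}\eps}\leq n^{-1}\eps^{-2}\E(Q_n^{(d)})^2\to 0$, so $n^{-1/2}\sup_{t\in[0,1]}|Q_{\nt}^{(d)}|\overset{p}{\to}0$.

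Finally, Theorem~\ref{thm_general_l_p} with $p=1$ yields $n^{-1/2}T_{\nt}^{(d)}=n^{-1/2}\big(\|S_{\nt}^{(d)}\|_1-Q_{\nt}^{(d)}\big)\overset{p}{\to}t^{1/2}\sigma M_1$ for each fixed $t\in[0,1]$. Since $t\mapsto T_{\nt}^{(d)}$ is nondecreasing and the limit $t\mapsto t^{1/2}\sigma M_1$ is continuous and nondecreasing, the same Dini-type argument as in Theorem~\ref{thm_p_unif} upgrades this to $\sup_{t\in[0,1]}\big|n^{-1/2}T_{\nt}^{(d)}-t^{1/2}\sigma M_1\big|\overset{p}{\to}0$, and then the triangle inequality
\begin{displaymath}
  \sup_{t\in[0,1]}\big|n^{-1/2}\|S_{\nt}^{(d)}\|_1-t^{1/2}\sigma M_1\big|\leq\sup_{t\in[0,1]}\big|n^{-1/2}T_{\nt}^{(d)}-t^{1/2}\sigma M_1\big|+n^{-1/2}\sup_{t\in[0,1]}|Q_{\nt}^{(d)}|
\end{displaymath}
completes the proof.

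The genuinely new point, and the only real obstacle, is recognizing that the $p>1$ argument survives at $p=1$ once the smooth linearization $p\,\operatorname{sgn}(x)|x|^{p-2}x$ is replaced by $\operatorname{sgn}(x)$ together with the one-sided inequality $|a+b|\geq|a|+\operatorname{sgn}(a)b$; after that substitution the exponent $2p-2$ collapses to $0$, so the second-moment bound becomes elementary and no appeal to the moment convergence theorem is needed there. One should also be mildly careful that $d=d(n)\to\infty$ is what drives $n^{-1}\E(Q_n^{(d)})^2\to 0$, exactly as in the previous theorem.
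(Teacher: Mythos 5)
Your proof is correct and follows essentially the same route as the paper: the decomposition with $Q_n^{(d)}=\sum_{i,j}\operatorname{sgn}(S_{j-1,i}^{(d)})X_{j,i}^{(d)}$ is exactly the paper's \eqref{eq:TQ_p_1} (written there as $S_{j-1,i}^{(d)}|S_{j-1,i}^{(d)}|^{-1}$), and the monotonicity of $T_n^{(d)}$, martingale property, second-moment bound $n^{-1}\E(Q_n^{(d)})^2\leq\sigma^2/d\to0$, Doob's inequality, and the Dini-type upgrade are all the same steps. Your explicit convention $\operatorname{sgn}(0)=0$ and the one-sided inequality $|a+b|\geq|a|+\operatorname{sgn}(a)b$ are a slightly cleaner treatment of the point the paper glosses over at $S_{n-1,i}^{(d)}=0$.
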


\begin{proof}
  It is clear that
  $\|S_n^{(d)} \|_{1}$ can be expressed as follows,
  \begin{displaymath}
    \|S_n^{(d)} \|_1
    = \sum_{i = 1}^{d} |S_{n,i}^{(d)}|
    = T_n^{(d)} + Q_n^{(d)},
  \end{displaymath}
  where
  \begin{equation}
    \label{eq:TQ_p_1}
    T_n^{(d)}
    = \sum_{i = 1}^{d} \big|S_{n,i}^{(d)}\big|
    - \sum_{i = 1}^{d} \sum_{j = 1}^{n} X_{j,i}^{(d)}
    S_{j-1,i}^{(d)}
    |S_{j-1,i}^{(d)}|^{-1} , \quad
    Q_n^{(d)}
    = \sum_{i = 1}^{d} \sum_{j = 1}^{n} X_{j,i}^{(d)}
    S_{j-1,i}^{(d)}|S_{j-1,i}^{(d)}|^{-1}.
  \end{equation}
  The sequence $T_n^{(d)}$ is monotone increasing, since
  \begin{align*}
    T_n^{(d)} - T_{n - 1}^{(d)} 
    &= \sum_{i = 1}^{d} \Bigl( |S_{n,i}^{(d)}|
    - |S_{n-1,i}^{(d)}| - X_{n,i}^{(d)}
    S_{n-1,i}^{(d)} \big|
    S_{n-1,i}^{(d)} \big|^{-1} \Bigl)\\
    & = \begin{cases}
      |S_{n,i}^{(d)}| - S_{n,i}^{(d)}, & \text{if $ S_{n-1,i}^{(d)} > 0$},\\
      |S_{n,i}^{(d)}| + S_{n,i}^{(d)}, & \text{if $ S_{n-1,i}^{(d)} < 0$},
    \end{cases}
                     \ \geq 0.
  \end{align*}
   
  Next, $Q_n^{(d)}$ is a martingale, since
  \begin{align*}
    \E \Big(Q_n^{(d)} - Q_{n - 1}^{(d)} \ \Big|\ \sF_{n - 1} \Big)
    &= \E \Big(\sum_{i = 1}^{d} X_{n,i}^{(d)}
      S_{n-1,i}^{(d)} 
      \big| S_{n-1,i}^{(d)} \big|^{-1} \ \Big|\ \sF_{n - 1} \Big) \\
    &= S_{n-1,i}^{(d)} 
      \big| S_{n-1,i}^{(d)} \big|^{-1}
      d^{1-1/p}\E \xi= 0,
  \end{align*}
  where $\mathcal{F}_{n - 1}^{(d)}$ is the $\sigma$-algebra
  generated by $X_1^{(d)}, \ldots, X_{n - 1}^{(d)}$.

  Then, by Doob's inequality,
  \begin{equation}
    \label{equ_p_1_1}
    \mathbf{P} \Big\{ \sup_{t \in [0, 1]} |Q_{\nt}^{(d)}| \geq n^{1 /2} \eps \Big\}
    \leq n^{-1} \epsilon^{-2}\E (Q_n^{(d)})^2
  \end{equation}
  The second moment of $Q_{n}^{(d)}$ is calculated as follows,
  \begin{align*}
    & \E (Q_n^{(d)})^2
      = \E \Big( \sum_{i = 1}^{d} \sum_{j = 1}^{n} X_{j,i}^{(d)}
      S_{j-1,i}^{(d)} |S_{j-1,i}^{(d)} |^{-1}\Big)^2 \\
    & = \sum_{i = 1}^{d} \sum_{j = 1}^{n} \sum_{i' = 1}^{d} \sum_{j' =1}^{n}
      \E \Big( X_{j,i}^{(d)} X_{j',i'}^{(d)}
      S_{j-1,i}^{(d)} |S_{j-1,i}^{(d)} |^{-1}
      S_{j'-1,i'}^{(d)}|S_{j'-1,i'}^{(d)}|^{-1} \Big) \\
    & = \sum_{i = 1}^{d} \sum_{j = 1}^{n} \E (X_{j,i}^{(d)})^2
      = n d\E (X_{1,1}^{(d)})^2
      = n  d^{-1}\E\xi^2.
  \end{align*}
  Then
  \begin{displaymath}
    n^{-1}\eps^{-2}\E (Q_n^{(d)})^2
    =\eps^{-2}d^{-1}\E\xi^2\to 0 \qquad \text{as} \ d\to\infty,
  \end{displaymath}
  and 
  \begin{displaymath}
    n^{-1/2}\sup_{t \in [0, 1]}|Q_{\lfloor nt \rfloor}^{(d)}|
    \overset{p} \to 0  \qquad \text{as} \ n\to\infty.
  \end{displaymath}
  
  Finally, the proof is completed by following the similar strategy as 
  in Theorem \ref{thm_p_unif}.
\end{proof}

\begin{theorem}[Uniform convergence of the $\ell_p$-metric
  of differences]
\label{equ_unif_lp}
Let $p \in [1, \infty)$. Consider a random walk with increments given by
\eqref{equ_spe_form}. Then
\begin{align*}
  \sup_{0 \leq s \leq t \leq 1} \Big|n^{-1/2}\|S_{\nt}^{(d)} -S_{\ns}^{(d)} \|_{p}
  - \sqrt{t - s} \sigma  M_p^{1/p} \Big|
  \ \overset{p}{\to} 0 \qquad \text{as} \ n\to\infty.
\end{align*}
\end{theorem}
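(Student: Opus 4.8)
The plan is to reduce to the $p$-th power form of the statement and then pass from the one-parameter Theorems~\ref{thm_p_unif} and~\ref{cor_p_unif} to the two-parameter supremum by discretising the lower endpoint and interpolating. Put
\begin{displaymath}
  \Phi_n:=\sup_{0\le s\le t\le 1}\Big|n^{-p/2}\big\|S_{\nt}^{(d)}-S_{\ns}^{(d)}\big\|_p^p-(t-s)^{p/2}\sigma^pM_p\Big|.
\end{displaymath}
Since $\big((t-s)^{p/2}\sigma^pM_p\big)^{1/p}=\sqrt{t-s}\,\sigma M_p^{1/p}$, and on the event $\{\Phi_n\le 1\}$ one has $n^{-p/2}\|S_{\nt}^{(d)}-S_{\ns}^{(d)}\|_p^p\le\sigma^pM_p+1$ for all $s\le t$ while $x\mapsto x^{1/p}$ is uniformly continuous on $[0,\sigma^pM_p+1]$, it suffices to prove $\Phi_n\overset{p}{\to}0$: the conclusion of the theorem then follows, using also $\Prob{\Phi_n>1}\to 0$.

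First I would establish a ``restarted'' version of Theorems~\ref{thm_p_unif}/\ref{cor_p_unif}: for each fixed $a\in[0,1)$,
\begin{displaymath}
  \sup_{a\le t\le 1}\Big|n^{-p/2}\big\|S_{\nt}^{(d)}-S_{\lfloor na\rfloor}^{(d)}\big\|_p^p-(t-a)^{p/2}\sigma^pM_p\Big|\overset{p}{\to}0.
\end{displaymath}
Indeed, $m\mapsto S_{\lfloor na\rfloor+m}^{(d)}-S_{\lfloor na\rfloor}^{(d)}$ is a random walk whose increments are again of the form \eqref{equ_spe_form} and which has $N:=n-\lfloor na\rfloor\to\infty$ admissible steps, so the arguments of Theorems~\ref{thm_p_unif} (when $p>1$) and~\ref{cor_p_unif} (when $p=1$) apply to it with $N$ playing the role of the number of steps; rescaling by $(N/n)^{p/2}\to(1-a)^{p/2}$ and using $\big|(\nt-\lfloor na\rfloor)/n-(t-a)\big|\le 2/n$ together with the uniform continuity of $x\mapsto x^{p/2}$ on $[0,1]$ yields the claim. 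I would also record, directly from Theorems~\ref{thm_p_unif}/\ref{cor_p_unif}, that $\max_{0\le r\le n}n^{-1/2}\|S_r^{(d)}\|_p\le(\sigma^pM_p+1)^{1/p}$ with probability tending to $1$.

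Now fix $\delta>0$ and the grid $u_k=\min(k\delta,1)$, $0\le k\le K:=\lceil 1/\delta\rceil$. Applying the restarted statement to the finitely many values $a=u_0,\dots,u_{K-1}$ shows $\mathcal{E}_n:=\max_{0\le k\le K-1}\sup_{u_k\le t\le 1}\big|n^{-p/2}\|S_{\nt}^{(d)}-S_{\lfloor nu_k\rfloor}^{(d)}\|_p^p-(t-u_k)^{p/2}\sigma^pM_p\big|\overset{p}{\to}0$. Given $0\le s\le t\le 1$, choose $k$ with $u_k\le s\le u_{k+1}$ and split $\big|n^{-p/2}\|S_{\nt}^{(d)}-S_{\ns}^{(d)}\|_p^p-(t-s)^{p/2}\sigma^pM_p\big|$ by the triangle inequality through $n^{-p/2}\|S_{\nt}^{(d)}-S_{\lfloor nu_k\rfloor}^{(d)}\|_p^p$ and $(t-u_k)^{p/2}\sigma^pM_p$: the first gap is $\le\mathcal{E}_n$; the gap $\big|(t-u_k)^{p/2}-(t-s)^{p/2}\big|\sigma^pM_p$ is at most $\sigma^pM_p$ times a modulus of continuity of $x\mapsto x^{p/2}$ on $[0,1]$ evaluated at $\delta$; and for $(\star):=n^{-p/2}\big|\|S_{\nt}^{(d)}-S_{\ns}^{(d)}\|_p^p-\|S_{\nt}^{(d)}-S_{\lfloor nu_k\rfloor}^{(d)}\|_p^p\big|$ the inequalities $|a^p-b^p|\le p\max(a,b)^{p-1}|a-b|$ (for $a,b\ge0$, $p\ge1$) and $\big|\|x\|_p-\|y\|_p\big|\le\|x-y\|_p$ give
\begin{displaymath}
  (\star)\le n^{-p/2}\,p\,\Big(2\max_{0\le r\le n}\|S_r^{(d)}\|_p\Big)^{p-1}\big\|S_{\ns}^{(d)}-S_{\lfloor nu_k\rfloor}^{(d)}\big\|_p.
\end{displaymath}
On the event where $\mathcal{E}_n\le\epsilon_1$ and $\max_{0\le r\le n}n^{-1/2}\|S_r^{(d)}\|_p\le(\sigma^pM_p+1)^{1/p}$ (which, for fixed $\delta$ and $\epsilon_1$, has probability tending to $1$), one has $n^{-p/2}\|S_{\ns}^{(d)}-S_{\lfloor nu_k\rfloor}^{(d)}\|_p^p\le(s-u_k)^{p/2}\sigma^pM_p+\epsilon_1\le\delta^{p/2}\sigma^pM_p+\epsilon_1$, hence $(\star)\le p\,2^{p-1}(\sigma^pM_p+1)^{(p-1)/p}\big(\delta^{p/2}\sigma^pM_p+\epsilon_1\big)^{1/p}$, the powers of $n$ cancelling exactly. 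Choosing $\delta$ and $\epsilon_1$ small first controls the last two terms uniformly in $(s,t)$, and then $\mathcal{E}_n\overset{p}{\to}0$ controls the first; this gives $\Phi_n\overset{p}{\to}0$.

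The step I expect to be the main obstacle is the term $(\star)$. In the $\ell_2$ case of \cite{MR4828862} the quantity $\|S_{\nt}^{(d)}-S_{\ns}^{(d)}\|_2^2$ may be expanded via the inner product and is monotone under refinement of the endpoints, which makes the passage from a finite grid to the full supremum routine; for $p\neq 2$ no such monotonicity in the lower endpoint $s$ is available. The point the estimate above makes precise is that the variation of $\|S_{\nt}^{(d)}-S_{\ns}^{(d)}\|_p^p$ as $s$ ranges over an interval of length $\delta$ is still $O(\sqrt\delta)$ in probability: the mean-value inequality replaces it by the product of an $O(\sqrt n)$ quantity, controlled uniformly by Theorems~\ref{thm_p_unif}/\ref{cor_p_unif}, with the $\ell_p$-norm of a sub-random-walk of $O(n\delta)$ steps, which is $O(\sqrt{n\delta})$ by the same theorems, and the two powers of $n$ cancel the $n^{-p/2}$ normalisation. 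Verifying the restarted statement carefully and assembling the several uniform estimates on a single high-probability event is where the bulk of the bookkeeping lies.
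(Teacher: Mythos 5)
Your argument is correct, but it is organised differently from the paper's proof, so a comparison is worth making. The paper discretises \emph{both} endpoints onto the grid $\{i/m\}$, gets the grid values from the pointwise Theorem~\ref{thm_general_l_p} together with stationarity and a union bound, and then controls the local oscillation $\sup_{z\in[i/m,(i+1)/m]}\|S_{\lfloor nz\rfloor}^{(d)}-S_{\lfloor n(i/m)\rfloor}^{(d)}\|_p$ by going back to the decomposition $\|S\|_p^p=T+Q$ of Theorems~\ref{thm_p_unif}/\ref{cor_p_unif}: monotonicity of $T$ reduces the sup to $T_{\lfloor n/m\rfloor}^{(d)}$, and Doob's inequality (via \eqref{equ_Q_diff_unif}, \eqref{equ_p_1_1}) handles $Q$, the factor $m$ being absorbed because the limit $m^{-p/2}\sigma^pM_p$ is eventually below the threshold. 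You instead discretise only the lower endpoint, invoke ``restarted'' versions of the full uniform Theorems~\ref{thm_p_unif}/\ref{cor_p_unif} at each grid point (which is legitimate by stationarity, though it does require checking that those theorems apply along the joint sequence $N=n-\lfloor na\rfloor\to\infty$, $d(n)\to\infty$ — a small amount of extra bookkeeping the paper avoids), and then interpolate in $s$ at the level of $p$-th powers via $|a^p-b^p|\le p\max(a,b)^{p-1}|a-b|$ plus the reverse triangle inequality, with a high-probability uniform bound on $\max_r n^{-1/2}\|S_r^{(d)}\|_p$. Your bookkeeping for the term $(\star)$ is sound (the powers of $n$ do cancel), but note that the difficulty you single out — lack of monotonicity in the lower endpoint for $p\neq2$ — is dispatched in the paper more cheaply: working with the norm itself rather than its $p$-th power, the inequality $\big|\|S_{\nt}^{(d)}-S_{\ns}^{(d)}\|_p-\|S_{\nt}^{(d)}-S_{\lfloor n(i/m)\rfloor}^{(d)}\|_p\big|\le\|S_{\ns}^{(d)}-S_{\lfloor n(i/m)\rfloor}^{(d)}\|_p$ controls the $s$-variation directly, so no mean-value step, no uniform bound on $\max_r\|S_r^{(d)}\|_p$, and no final passage through $x\mapsto x^{1/p}$ are needed. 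What your route buys in exchange is that, once the restarted uniform statements are granted, you never have to reopen the martingale/Doob machinery inside this proof; the paper's route is more self-contained at the price of redoing that estimate over the short blocks.
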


\begin{proof}
  Take some $m \in \mathbb{N}$. By Theorem \ref{thm_general_l_p},
  for every $i = 0, \ldots, m - 1$,
  \begin{displaymath}
    n^{-1/2}\|S_{\lfloor n(i / m) \rfloor}^{(d)} \|_p
    \overset{p} \to \sqrt{i/m} \sigma  M_p^{1/p} .
  \end{displaymath}
  Moreover, for every integer $0 \leq i \leq j \leq m$,
  by stationarity of the increments of random walks, we obtain that
  \begin{displaymath}
    n^{-1/2}\|S_{\lfloor n(j / m) \rfloor}^{(d)} - S_{\lfloor n(i / m)\rfloor}^{(d)} \|_p
    \overset{p} \to \sqrt{(j-i)/m} \sigma  M_p^{1/p} .
  \end{displaymath}
  By the union bound, it follows that, for every fixed $m \in \mathbb{N}$,
  \begin{displaymath}
    \max_{0 \leq i \leq j \leq m}
    \Big| n^{-1/2}\|S_{\lfloor n(j / m) \rfloor}^{(d)} - S_{\lfloor n(i / m) \rfloor}^{(d)} \|_p
    - \sqrt{(j-i)/m} \sigma  M_p^{1/p}  \Big| \overset{p} \to 0.
  \end{displaymath}
  If $0 \leq s \leq t \leq 1$ are such that
  $s \in [i/m, (i+1)/m)$ and
  $t \in [j/m, (j+1)/m)$, then by the triangle inequality,
  \begin{multline*}
    \Big| n^{-1/2}\| S_{\lfloor nt \rfloor}^{(d)} - S_{\lfloor ns\rfloor}^{(d)} \|_{p}
    -  n^{-1/2}\|S_{\lfloor n(j / m) \rfloor}^{(d)} - S_{\lfloor n(i /m) \rfloor}^{(d)} \|_p\Big| \\
    \leq n^{-1/2}\sup_{z \in [\frac{i}{m}, \frac{i + 1}{m}]}
    \|S_{\lfloor nz \rfloor}^{(d)} - S_{\lfloor n(i / m)\rfloor}^{(d)} \|_p
    + n^{-1/2}\sup_{z \in [\frac{j}{m}, \frac{j + 1}{m}]}
    \| S_{\lfloor nz \rfloor}^{(d)} - S_{\lfloor n(j / m) \rfloor}^{(d)} \|_p.
  \end{multline*}
  Consider the random variable
  \begin{displaymath}
    \eps_{m, d} = n^{-1/2}\max_{i \in \{ 0, \ldots, m - 1 \}}
    \sup_{z \in [\frac{i}{m}, \frac{i + 1}{m}]}
    \| S_{\lfloor nz \rfloor}^{(d)} - S_{\lfloor n(i / m) \rfloor}^{(d)} \|_p.
  \end{displaymath}
  To complete the proof, it suffices to show that, for every $\epsilon > 0$,
  \begin{displaymath}
    \lim_{m \to \infty} \limsup_{n\to\infty}
    \mathbf{P}\{\epsilon_{m, d} \geq \eps\} = 0.
  \end{displaymath}
  By the union bound, it follows that, for every fixed $m \in \mathbb{N}$,
  \begin{displaymath}
    \mathbf{P}\{\eps_{m ,d} \geq \eps\}
    \leq m \mathbf{P}\bigg\{ \sup_{t \in [0, \frac{1}{m}]}
    \| S_{\nt}^{(d)} \|_p^p \geq n^{p / 2}\eps^p\bigg\}.
  \end{displaymath}
  Since $\|S_{\nt}^{(d)} \|_p^p = T_{\nt}^{(d)} + Q_{\nt}^{(d)}$ with 
  $T_{\nt}^{(d)}$ and $Q_{\nt}^{(d)}$ defined by \eqref{eq:TQ_p} for $p>1$
  and by \eqref{eq:TQ_p_1} for $p=1$, it suffices to show that
  \begin{equation}
    \label{equ_uni_p_1}
    \lim_{m \to \infty} \limsup_{n\to\infty}
    m \mathbf{P} \{T_{\lfloor n / m \rfloor}^{(d)} \geq n^{p / 2} \eps^p / 2\} = 0,
  \end{equation}
  and
  \begin{equation}
    \label{equ_uni_p_2}
    \lim_{m \to \infty} \limsup_{n\to\infty}
    m \mathbf{P} \{\sup_{t \in [0, \frac{1}{m}]} Q_{\nt}^{(d)} \geq n^{p / 2} \eps^p / 2\} = 0.
  \end{equation}
  For fixed $m \in \mathbb{N}$,
  \begin{displaymath}
    n^{-p/2}T_{\lfloor n / m \rfloor}^{(d)}
    \overset{p} \to m^{-p/2} \sigma^p M_p\quad\text{as}\ n\to\infty.
  \end{displaymath}
  Hence, \eqref{equ_uni_p_1} holds for every
  $m > 2^{2/p} \sigma^2 (M_p)^{2 / p} /\eps^2$. By Doob's inequality,
  \begin{displaymath}
    m \mathbf{P} \{\sup_{t \in [0, \frac{1}{m}]}
    Q_{\nt}^{(d)} \geq n^{p / 2} \eps^p / 2\}
    \leq m n^{-p}(\eps^p / 2)^{-2}\E (Q_{\lfloor n / m \rfloor}^{(d)})^2\to 0,
  \end{displaymath}
  where the last step is implied by \eqref{equ_Q_diff_unif} for $p > 1$ and by \eqref{equ_p_1_1}
  for $p = 1$, hence \eqref{equ_uni_p_2} holds.
\end{proof}

\begin{proof}[Proof of Theorem \ref{the_pair_lp}.]
  By Corollary 7.3.28 of \cite{MR1835418}, the Gromov-Hausdorff distance 
  between $(n^{-1/2}\sZ_{n},\|\cdot\|_p)$ and 
  $\big([0,1], \sqrt{|t-s|} \sigma
  M_p^{1/p} \big)$ is bounded by
  \begin{displaymath}
    2 \sup_{0 \leq s \leq t \leq 1} \Big| 
    n^{-1/2}\| S_{\nt}^{(d)}-S_{\ns}^{(d)} \|_{p}
    -\sqrt{t - s} \sigma  M_p^{1/p}  \Big|.
  \end{displaymath}
  The proof is completed by referring to Theorem \ref{equ_unif_lp}.
\end{proof}

\section*{Acknowledgements}
The author is grateful to Ilya Molchanov for his suggestions and
patience in instructing and correcting this paper.

\bibliographystyle{unsrt}
\bibliography{paper_1}

\end{document}